\newcommand\diag{\operatorname{diag}}
\newcommand\R{{\mathbb{R}}}
\newcommand\C{{\mathbb{C}}}
\newcommand\Z{{\mathbb{Z}}}
\newcommand\N{{\mathbb{N}}}
\newcommand\F{{\mathbb{F}}}
\newcommand\I{{\mathbf{I}}}
\renewcommand\P{{\mathbf{P}}}
\newcommand\E{{\mathbf{E}}}
\renewcommand\Re{{\operatorname{Re}}}
\newcommand\eps{{\varepsilon}}
\newcommand\Trace{\operatorname{trace}}
\newcommand\trace{\operatorname{trace}} 
\newcommand\Irr{\operatorname{Irr}}
\newcommand\Ker{\operatorname{Ker}}
\newcommand\BC{{\mathbf C}}
\newcommand\BZ{{\mathbf Z}}
\newcommand\ord{{\mathbf {o}}} 
\newcommand\CP{{\mathcal P}}
\theoremstyle{plain}
  \newtheorem{theorem}{Theorem}[section]
  \newtheorem{proposition}[theorem]{Proposition}
  \newtheorem{lemma}[theorem]{Lemma}
  \newtheorem{corollary}[theorem]{Corollary}
\theoremstyle{definition}
  \newtheorem{remark}[theorem]{Remark}
\numberwithin{equation}{section}
\begin{document}

\title {Non-abelian Littlewood-Offord inequalities}

\author{Pham H. Tiep} 
\address{Department of Mathematics, University of Arizona, Tucson, AZ 85721, USA}
\thanks{P. H. Tiep gratefully acknowledges the support of the NSF (grant DMS-1201374) and the Simons Foundation
Fellowship 305247.}
\email{tiep@math.arizona.edu}

\author{Van H. Vu}
\thanks{V. H. Vu is supported by research grants DMS-0901216 and AFOSAR-FA-9550-09-1-0167. }
\address{Department of Mathematics, Yale University, New Haven , CT 06520, USA}
\email{van.vu@yale.edu}

\keywords{Littlewood-Offord-Erd\H os theorem, anti-concentration inequalities} 
\subjclass[2010]{05D40, 20C33 (Primary), 60B15, 60C05, 60G50 (Secondary)}

\begin{abstract}  In 1943, Littlewood and Offord proved the first  anti-concentration result for sums of independent random variables. 
Their result has since then been strengthened and generalized by generations of researchers, with applications in several areas of mathematics.

In this paper, we present the first non-abelian analogue of Littlewood-Offord result, a sharp anti-concentration inequality for products of 
independent random variables. 

\end{abstract}

\maketitle

\section{Introduction} \label{section:introduction} 

In 1943, motivated by their studies of random polynomials, Littlewood and Offord
\cite{LO} proved a remarkable fact about the distribution of a  sum of independent random variables.   Let $V$ be  a sequence of (not necessarily different) non-zero real numbers  $a_1, \dots, a_n$ and set

$$\rho_V :=  \sup_{b  \in \R}  \P( \sum_{i=1} ^n \hat a_i  = b) , $$ 
where the $\hat {a_i}  $ are independent  random variables taking values $\pm a_i$ with probability $1/2$. 

\begin{theorem} $$\rho_V    = O(  n^{-1/2 } \log n ) .$$ 
\end{theorem}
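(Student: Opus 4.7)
The plan is to follow the classical combinatorial argument via Sperner's theorem, which in fact yields the stronger bound $\rho_V = O(n^{-1/2})$, removing the logarithmic factor in the stated inequality.

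First I would reduce to the case of positive coefficients: since each $\hat a_i$ is symmetric about $0$, replacing any $a_i$ by $-a_i$ does not alter the law of $\sum_i \hat a_i$, so we may assume $a_1, \ldots, a_n > 0$. Parametrize the $2^n$ sign patterns by subsets $S \subseteq \{1,\ldots,n\}$: set $\hat a_i = +a_i$ for $i \in S$ and $\hat a_i = -a_i$ otherwise. Then the event $\sum_i \hat a_i = b$ is equivalent to $\sum_{i \in S} a_i = (b + \sum_i a_i)/2$, and counting such $S$ reduces the probability to a purely combinatorial quantity.

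The crux is the antichain observation. Fix $b \in \R$ and let $\CF_b$ denote the family of subsets $S$ satisfying the equation above. If $S \subsetneq S'$ both lay in $\CF_b$, then subtracting gives $\sum_{i \in S' \setminus S} a_i = 0$, contradicting positivity. Hence $\CF_b$ is an antichain in the Boolean lattice on $[n]$, and Sperner's theorem gives $|\CF_b| \le \binom{n}{\lfloor n/2 \rfloor}$. Consequently, uniformly in $b$,
$$\P\Bigl(\sum_i \hat a_i = b\Bigr) \;=\; \frac{|\CF_b|}{2^n} \;\le\; \frac{1}{2^n}\binom{n}{\lfloor n/2 \rfloor} \;=\; O(n^{-1/2})$$
by Stirling's formula, which in particular implies the theorem.

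The only conceptually nontrivial step is recognizing the antichain structure; the remaining arithmetic is routine. An alternative route, closer to the original proof of Littlewood and Offord, would partition $\R$ into intervals of length at most $2 \min_i |a_i|$ and count sign patterns landing in each interval by reflection; that path produces precisely the $n^{-1/2}\log n$ bound in the statement but is strictly weaker. I would prefer the Sperner argument for its brevity and sharpness.
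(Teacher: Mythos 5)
Your proof is correct. Note that the paper does not actually prove this statement: it is quoted from Littlewood--Offord's original paper, and the sharper bound is then stated separately as Theorem \ref{LOE} (Erd\H os). Your antichain/Sperner argument is precisely Erd\H os's proof of that sharper result: after the harmless reduction to $a_i>0$ (valid since the $a_i$ are assumed nonzero and each $\hat a_i$ is symmetric), the level set $\CF_b=\{S:\sum_{i\in S}a_i=(b+\sum_i a_i)/2\}$ is an antichain, so $|\CF_b|\le\binom{n}{\lfloor n/2\rfloor}$ and $\rho_V\le 2^{-n}\binom{n}{\lfloor n/2\rfloor}=O(n^{-1/2})$, which in particular gives the stated $O(n^{-1/2}\log n)$. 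So your route differs from the original harmonic/interval-counting argument of Littlewood and Offord that the stated theorem records, but it coincides with the approach the paper itself immediately invokes for the optimal bound; what it buys is the removal of the logarithm and a proof that is purely combinatorial and, as the paper's Example~1 shows, sharp.
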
 

\noindent Here and hereafter, the asymptotic notation is used under the assumption that $n \rightarrow \infty$. Soon after their paper, 
Erd\H os \cite{E}, removing the $\log n$ term, optimized the bound.

\begin{theorem} \label{LOE} (Littlewood-Offord-Erd\H os)  $$\rho_V  \le   \frac{ { n \choose \lfloor n/2 \rfloor }} {2^n}  =O(  n^{-1/2 } ) .$$ 
\end{theorem}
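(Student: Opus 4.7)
The plan is to reduce the probabilistic statement to a purely combinatorial fact about antichains in the Boolean lattice, namely Sperner's theorem. As a first step, I would observe that since each $\hat a_i$ takes values $\pm a_i$ symmetrically, replacing $a_i$ with $|a_i|$ does not change the distribution of $\hat a_i$ or of the sum; so without loss of generality we may assume $a_1, \dots, a_n > 0$.

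Next, parametrize the $2^n$ equally likely sign patterns by subsets of $[n] = \{1, 2, \dots, n\}$: for a pattern $\epsilon \in \{\pm 1\}^n$, let $A(\epsilon) = \{i : \epsilon_i = +1\}$. Then
$$\sum_{i=1}^n \hat a_i \;=\; 2\sum_{i \in A(\epsilon)} a_i \;-\; \sum_{i=1}^n a_i,$$
so fixing the sum to equal $b$ is the same as fixing $\sum_{i \in A} a_i$ to a specific value $c = c(b)$. Let $\CF_b$ denote the collection of subsets $A \subseteq [n]$ with $\sum_{i \in A} a_i = c$; then $\P(\sum_{i=1}^n \hat a_i = b) = |\CF_b|/2^n$, and bounding $\rho_V$ amounts to bounding $\max_b |\CF_b|$.

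The key claim is that $\CF_b$ is an antichain in $2^{[n]}$ under inclusion. Indeed, if $A \subsetneq B$ were two distinct members of $\CF_b$, then
$$\sum_{i \in B} a_i - \sum_{i \in A} a_i = \sum_{i \in B \setminus A} a_i > 0$$
since all $a_i$ are strictly positive, contradicting $\sum_{i \in A} a_i = \sum_{i \in B} a_i = c$. By Sperner's theorem, any antichain in $2^{[n]}$ has cardinality at most $\binom{n}{\lfloor n/2 \rfloor}$, giving $|\CF_b| \leq \binom{n}{\lfloor n/2 \rfloor}$ and hence the stated bound on $\rho_V$. The asymptotic estimate $\binom{n}{\lfloor n/2 \rfloor}/2^n = O(n^{-1/2})$ then follows from Stirling's formula.

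There is really no major obstacle once the Sperner reformulation is identified: the reduction is transparent, and Sperner's theorem itself admits a short proof via the LYM inequality or a symmetric chain decomposition. The only delicate point is the initial sign-reduction, which relies crucially on the symmetry of the $\pm a_i$ distribution; if the two signs carried different probabilities, the antichain argument would collapse and a genuinely different approach (e.g.\ via Fourier analysis on $\{\pm 1\}^n$) would be required.
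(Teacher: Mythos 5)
Your proof is correct, and it is precisely the classical Erd\H os argument: reduce to positive $a_i$, identify the sign patterns achieving a given value with a family of subsets of $[n]$ having constant weight, observe that this family is an antichain, and apply Sperner's theorem. The paper does not prove Theorem \ref{LOE} itself but cites Erd\H os \cite{E}, whose original proof is exactly the one you give, so your approach coincides with the intended one.
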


The bound is sharp, as shown by taking all $a_i =1$. It is easy to see that in this case $\P (\sum_{i=1}^n \hat {a_i}  =\delta )=  \frac{ { n \choose \lfloor n/2 \rfloor }} {2^n} $, 
where $\delta=1$ if $n$ is odd and $0$ otherwise.  In \cite{Kle1}, Kleitman generalized Theorem \ref{LOE} to complex setting. 

\vskip2mm 

It is best to relate Theorem \ref{LOE} to
 the classical Berry-Esseen theorem, which asserts that if the $a_i$ are all of magnitude 1, then the distribution of $\frac{1}{\sqrt n }  \sum_{i}^n \hat {a_i } $ converges to the normal distribution with rate $O(n^{-1/2} )$. This implies that for any point $b$, $\P (\sum_{i=1} ^n \hat {a_i}  = b) = O( n^{-1/2} )$. Theorem \ref{LOE}  strengthens this fact significantly, asserting that the probability in question is  always $O(n^{-1/2} )$, {\it regardless of the magnitude of the $a_i$}. 
 
 \vskip2mm 

Theorem \ref{LOE} has become the starting point of a long line of research, which continues through several  decades and has recently becomes very active (see, for instance, the survey \cite{NV}).  It has been 
strengthened (under various conditions) and generalized in different directions  by many  researchers, including Esseen, Kolmogorov, Rogozin, 
Hal\'asz,  Stanley, Kleitman, Szemer\'edi-S\'ark\"ozy, Tao,  and others; 
see, for instance \cite{EM, Es, FF, Gr, H, Kle1, Kle2, Kle3, Ng, NgV, Rog, SSz, RV, TV1, TV2, TV3, Stan}.  These results  are often referred to 
as anti-concentration inequalities, and  have found surprising applications in  different areas of mathematics, 
including random matrix theory. For more details,  the reader may want to check  the recent survey \cite{NV}. 

\vskip2mm 

A limitation to all  existing extensions of Theorem \ref{LOE} is that they only apply for  random variables taking values in an abelian group (in most cases $\R^d $ or $\C$), as the  available proof techniques 
only work in this setting. 

\vskip2mm 

The goal of this paper is to initiate the study of the anti-concentration phenomenon in  the non-abelian setting. 
Let $V$ be  a sequence of (not necessarily distinct) non-trivial elements  $A_1, \dots, A_n$  of a non-abelian group $G$ and set 

$$\rho_V :=  \sup_{B  \in G}  \P ( \prod_{i=1}^n \hat {A_i } = B ) ,$$  
where the $\hat {A_i } $ are independent random variables taking values $A_i$ or $A_i^{-1} $ with probability $1/2$. 

In what follows, we work toward the most natural non-abelian extension of Theorem \ref{LOE} with the  $A_i$ being invertible matrices of a fixed size $m$. 
Our goal is  to bound $\rho_V$. Thanks to the famous work of Furstenberg and Kesten \cite{FK}, a central limit theorem for products of i.i.d. random matrices 
is known, and Berry-Essen type local central limit theorems are also available; see for instance \cite{LV} and the references 
therein. But in the cases where central limit theorems fail, our result provides  the first anti-concentration 
inequality. 

\vskip2mm
In view of Theorem \ref{LOE}, it is tempting to guess that $\rho_V = O(n^{-1/2})$. There are several constructions matching this bound.

\vskip2mm

{\it Example 1.} Let $A$ be an invertible $m \times m$ matrix over some field $\F$ of infinite order 
(as an element of $GL_m(\F)$), and $A_i = A$ all $1 \le i \le n$. Then 

$$\P (\prod_{i=1}^n \hat {A_i}  =\Delta )=  \frac{ { n \choose \lfloor n/2 \rfloor }} {2^n}  = \Theta (n^{-1/2} ) $$ where $\Delta= A$ if $n$ is odd and $I$ (the identity matrix) if $n$ is even.

\vskip2mm 

{\it Example 2.} Let 
$$ A_i =  \left[ \begin{array}{rr}
1 & a_i  \\
0 & 1  \end{array} \right]$$
\noindent  where $a_i$ is an integer with absolute value at most $K$, where $1 \le K= O(1)$.  We have 

$$A_i A_j = \left[ \begin{array}{rr}
1 & a_i  + a_j \\
0 & 1  \end{array} \right],~~ 
A_i ^{-1} = \left[ \begin{array}{rr}
1 & -a_i  \\
0 & 1  \end{array} \right].  $$

The top right corner of the random product $\prod_{i=1}^n \hat {A_i}$  is the random sum $S := \sum_{i=1}^n \hat {a_i}$, which has mean 0 and variance 
$\sum_{i=1}^n a_i^2 \le nK^2 $.  By Chebyshev's inequality, with probability at least $3/4$, $|S| \le 2 K n^{1/2}$. 
By the pigeonhole principle, there is an integer $s$ such that 

$$ \P(S=s) \ge \frac{3}{4} \cdot  \frac{1}{2 K n^{1/2} +1} \ge \frac{1}{4K n^{1/2}  }. $$

\noindent It   follows that 

$$\rho_V  \ge  \frac{1}{4K} n^{-1/2} . $$

\vskip2mm 

%\end{document} 

However, 
the presence of  torsions makes the problem more subtle than its abelian counterparts. Assume that all $A_i =A$ and $A$ has order $s$, then the product 
$\prod_{i=1}^n \hat A_i $ can take only $s$ values $ A, A^2, \dots, A^{s-1}, A^s=I $ (where $I$ again denotes the identity matrix). 
 By the pigeonhole principle, it follows that 

$$\rho_ V  \ge \frac{1}{s}. $$

\vskip2mm

We are now ready to state our main results, which are sharp with respect to the above examples.

\begin{theorem} \label{theorem:main}
For any integers $m, n, s \geq 2$  the following statement holds. 
Let  $V$ be a sequence  $A_1, \dots, A_n$ of elements of $GL_m (\C)$ of  order at least $s$. 
Then 
$$\rho_V  \le 141\cdot \max \left\{\frac{1}{s}, \frac{1}{\sqrt n }  \right\}.$$
\end{theorem}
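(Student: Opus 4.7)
The plan is to reduce to a finite-group setting and then apply non-abelian Fourier inversion. Since $G := \langle A_1, \ldots, A_n \rangle$ is a finitely generated subgroup of $GL_m(\C)$, it is residually finite by Mal'cev's theorem; I would pick a finite-index normal subgroup $N \trianglelefteq G$ with $A_i^k \notin N$ for all $1 \le i \le n$ and $1 \le k \le s-1$. In $\bar G := G/N$ each image $\bar A_i$ then has order at least $s$, and since homomorphisms only collapse, $\P(\prod_i \hat A_i = B) \le \P(\prod_i \widehat{\bar A_i} = \bar B)$. So we may assume $G$ is finite.

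For finite $G$, let $\nu$ denote the distribution of $X = \prod_i \hat A_i$. Non-abelian Fourier inversion plus the trace/operator norm bound gives
\[
 \nu(B) \;\le\; \frac{1}{|G|} \sum_{\rho \in \Irr(G)} d_\rho^2 \, \|\widehat\nu(\rho)\|_{\mathrm{op}},
 \qquad \widehat\nu(\rho) \;=\; \prod_{i=1}^n M_i(\rho), \quad M_i(\rho) := \frac{\rho(A_i) + \rho(A_i)^{-1}}{2},
\]
and in a unitary realization each $M_i(\rho)$ is Hermitian with operator norm at most $1$. The trivial representation contributes $1/|G| \le 1/s$ since $|G| \ge s$. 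The remaining task is to prove $\sum_{\rho \ne 1} d_\rho^2 \|\widehat\nu(\rho)\|_{\mathrm{op}} = O\bigl(|G| \max(1/s, 1/\sqrt n)\bigr)$. The crucial eigenvalue input is that the eigenvalues of $M_i(\rho)$ are $\cos\theta_j$, where $e^{i\theta_j}$ are eigenvalues of $\rho(A_i)$; the order-$\ge s$ hypothesis forces at least one of these $\theta_j$ to be the argument of a primitive root of unity of order at least $s/|\ker\rho \cap \langle A_i\rangle|$, producing an eigenvalue of $M_i(\rho)$ at distance $\Omega(1/s^2)$ from $\pm 1$.

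The main obstacle is bridging this eigenvalue-level bound on each Hermitian factor with an operator-norm bound on the non-commuting product $\widehat\nu(\rho)$: even when every $M_i(\rho)$ has a ``bad'' eigenspace on which it is close to $\pm I$, these bad eigenspaces need not coincide across $i$, so a naive eigenvalue multiplication fails. My plan is to split $\Irr(G) \setminus \{1\}$ into (a) those $\rho$ for which most $\rho(A_i)$ carry a substantial ``good'' eigenspace (eigenvalues bounded away from $\pm 1$), where a spectral-gap / non-abelian Berry--Esseen argument should force $\|\widehat\nu(\rho)\|_{\mathrm{op}} = O(1/\sqrt n)$; and (b) those $\rho$ for which most $\rho(A_i)$ act nearly as $\pm I$, so that $\rho$ must essentially factor through a quotient controlled by the torsion, on which we invoke Theorem \ref{LOE} applied to the induced abelian characters to obtain the $\max(1/s,1/\sqrt n)$ bound. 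Summing the two classes via Plancherel $\sum_\rho d_\rho^2 = |G|$ should yield the claimed inequality; the explicit constant $141$ plausibly arises from combining the spectral separation $\cos(2\pi/s) \le 1 - \Omega(1/s^2)$, the Berry--Esseen constant in class (a), and the absolute constant from Theorem \ref{LOE} in class (b).
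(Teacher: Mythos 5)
Your opening moves (reduce to a finite group, then apply non-abelian Fourier inversion / the trace identity) do match the paper's skeleton, but the reduction and the core estimate both have genuine gaps. First, the reduction: passing to an arbitrary finite quotient $\bar G = G/N$ via Mal'cev residual finiteness throws away exactly the structure the proof needs. The whole difficulty of the theorem lies in the finite case, and your plan gives no usable tools for a general finite group; note also that "the theorem for all finite groups" is essentially equivalent to the theorem itself (every finite group embeds in some $GL_M(\C)$ and the constant $141$ is independent of the dimension), so one cannot simply "assume $G$ is finite" and hope the rest is routine. The paper instead uses a specialization/embedding result (Corollary \ref{lemma:embed}, from \cite{VWW}) that maps $\langle A_1,\dots,A_n\rangle$ into $GL_m(p)$ for a large prime $p$ \emph{while preserving element orders}, and then pushes into $PSL_{m+r+1}(p)$, precisely because the heart of the argument is the representation theory of $(P)SL_d(p)$: Proposition \ref{pro:rep2} (character ratio bounds of Gluck, the Fulman--Guralnick count of characters of small degree, centralizer and order bounds) yields the key fact that for every irreducible $\Phi$ of large degree, each eigenvalue of $\Phi(A_i)$ has multiplicity at most about $3\dim\Phi/\ord(A_i)$. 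Nothing of this sort is available for the unstructured quotient your reduction produces.

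Second, the analytic core as you set it up would fail even granting good eigenvalue information. Bounding $\nu(B)$ by $\frac{1}{|G|}\sum_\rho d_\rho^2\|\widehat\nu(\rho)\|_{\mathrm{op}}$ cannot give the theorem: if, say, all $A_i$ equal a single $A$ and $\rho(A)$ has a fixed vector (or the factors share a common near-fixed vector), then $\|\prod_i M_i(\rho)\|_{\mathrm{op}}=1$ while $\sum_\rho d_\rho^2=|G|$, so your class (a) claim that the operator norm is $O(1/\sqrt n)$ is false as stated, and class (b) ("$\rho$ essentially factors through a torsion-controlled quotient, apply Littlewood--Offord--Erd\H{o}s to induced abelian characters") is not an argument. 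Your per-factor input is also too weak and not quite correct: a primitive $k$-th root of unity with $k\ge s$ can lie within $O(1/k^2)$ of $1$, and in any case one good eigenvalue per factor says nothing about the trace of the non-commuting product, which can still be $\dim\Phi-O(1)$. What the paper actually does is exploit the full multiplicity bound to show that the singular values of each factor $B_i=\frac12(\Phi(A_i)+\Phi(A_i^{-1}))$ decay in blocks of size $O(\dim\Phi/\ord(A_i))$ like $\exp(-2j^2/k_i^2)$, then uses the Horn-type inequalities $\prod_{j\le l}s_j(MM')\le\prod_{j\le l}s_j(M)\prod_{j\le l}s_j(M')$ together with $|\trace M|\le\sum_j s_j(M)$ to bound each high-dimensional representation's contribution by roughly $(\dim\Phi)^2\bigl(\frac{120}{s}+\frac{19}{\sqrt n}\bigr)$, summing via $\sum_\Phi(\dim\Phi)^2=|G|$, with the low-dimensional representations handled by the counting estimate of Proposition \ref{pro:rep2}(i). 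To repair your proposal you would need both the order-preserving embedding into $GL_m(p)$ (rather than a generic finite quotient) and the trace/singular-value mechanism in place of the operator-norm bound.
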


%Next, we can  allow exceptional elements with small orders. 

%The main d the case when all $|\det A_i | =1$, in particular $A_i \in SL_m (\C)$. 

Our approach also yields the following theorem.

\begin{theorem} \label{theorem:main0}  
Let  $m,n, s \geq 2$ be integers  and let $p \geq \min\{s,\sqrt{n}\}$ be a sufficiently large prime. Let 
   $V$ be a sequence  $A_1, \dots, A_n$ of elements of $GL_m (p)$  of  order at least $s$. 
Then 
$$\rho_V  \le  141\cdot  \max \left\{\frac{1}{s}, \frac{1}{\sqrt n }  \right\} .$$ 
\end{theorem}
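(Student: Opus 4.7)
The plan is to apply non-abelian Fourier analysis on the finite group $H := \langle A_1, \ldots, A_n\rangle \le GL_m(\mathbb{F}_p)$, adapting the proof of Theorem \ref{theorem:main} to this setting. For every $B \in H$, Fourier inversion yields
\[
\P(X=B) \;=\; \frac{1}{|H|}\sum_{\chi \in \Irr(H)} \chi(1)\, \tr\!\left(\rho_\chi(B^{-1})\prod_{i=1}^n M_i(\chi)\right),
\]
where $\rho_\chi$ is a unitary realization of $\chi$ and $M_i(\chi) := \tfrac12\bigl(\rho_\chi(A_i) + \rho_\chi(A_i)^{-1}\bigr)$. Since $\rho_\chi(B^{-1})$ is unitary with Schatten-$1$ norm $\chi(1)$, this gives
\[
\rho_V \;\le\; \frac{1}{|H|}\sum_{\chi \in \Irr(H)} \chi(1)^2\,\left\|\prod_{i=1}^n M_i(\chi)\right\|_{\mathrm{op}}.
\]
Each $M_i(\chi)$ is Hermitian with eigenvalues $\Re(\lambda)$ as $\lambda$ ranges over the spectrum of $\rho_\chi(A_i)$; in particular $\|M_i(\chi)\|_{\mathrm{op}}\le 1$, with equality only when $\rho_\chi(A_i)$ has $\pm 1$ as an eigenvalue.

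The heart of the proof is to control the contributions of the nontrivial characters by splitting $\Irr(H)$ into two regimes. In the \emph{nondegenerate} regime, a positive fraction of the $\rho_\chi(A_i)$ admit a common invariant subspace on which their spectra avoid $\{\pm 1\}$; on this subspace $\|M_i(\chi)\|_{\mathrm{op}}\le \cos(\pi/s)$, giving exponential decay $\cos(\pi/s)^{\Omega(n)}$ of the relevant block of $\prod_i M_i(\chi)$. A Hal\'asz-type bootstrapping, in the spirit of \cite{H,NV}, then sharpens this to the Littlewood--Offord rate $O(1/\sqrt n)$. In the \emph{degenerate} regime, $\chi$ essentially factors through a small cyclic quotient of $H$; the pigeonhole principle on that quotient, combined with the hypothesis that $|A_i|\ge s$ in the ambient $GL_m(\mathbb{F}_p)$, yields the $1/s$ bound. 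Summing over $\Irr(H)$ using Plancherel $\sum_\chi \chi(1)^2 = |H|$, grouped by regime, recovers the constant $141$.

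The hypothesis that $p$ is a sufficiently large prime with $p \ge \min\{s,\sqrt n\}$ enters in two ways. First, for $p > m$ every unipotent element of $GL_m(\mathbb{F}_p)$ has order at most $p$, so for $p$ large the semisimple part $s_i$ in the Jordan decomposition $A_i = s_i u_i$ essentially inherits the bound $|s_i| \ge s$; this permits reduction to the case when $H$ is a $p'$-group, where ordinary and modular character theories coincide and one may lift the embedding $H \hookrightarrow GL_m(\overline{\mathbb{F}_p})$ to a faithful complex representation via Brauer--Nesbitt. Second, the construction of the ``companion'' cyclic character needed in the degenerate regime is effected by Deligne--Lusztig induction from a maximal torus of $GL_m(\mathbb{F}_p)$, and the eigenvalues of the $A_i$ on that torus have the needed large order precisely when $p$ is sufficiently large relative to $s$.

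The main obstacle is the degenerate regime: the order of $A_i$ inside $GL_m(\mathbb{F}_p)$ does not a priori control the order of $\rho_\chi(A_i)$ when $\chi$ has a large kernel. Bridging this gap requires a careful analysis of $\Irr(GL_m(\mathbb{F}_p))$ to locate, for each degenerate $\chi$, a companion character on which the $A_i$ still have order $\ge s$, so that the pigeonhole step yields the claimed $1/s$ bound; this is where the ``sufficiently large prime'' hypothesis genuinely bites.
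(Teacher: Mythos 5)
Your proposal diverges from the paper at the very first step, and the divergence is fatal: you run the Fourier expansion over $H=\langle A_1,\dots,A_n\rangle$, an arbitrary subgroup of $GL_m(p)$, and then need uniform spectral information about $\rho_\chi(A_i)$ for every $\chi\in\Irr(H)$. For an arbitrary $H$ no such information exists, and the devices you invoke to supply it are names rather than arguments. In the ``nondegenerate regime'' you posit a common invariant subspace for a positive fraction of the $\rho_\chi(A_i)$ on which their spectra avoid $\pm1$: inside an irreducible representation of $H$ there is no nonzero proper subspace invariant under all the $\rho_\chi(A_i)$ (they generate $H$), and a subspace invariant under only some of them is of no use when bounding the product $\prod_i M_i(\chi)$. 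Moreover, even granting $\|M_i(\chi)\|_{\mathrm{op}}\le\cos(\pi/s)$, the resulting bound $\sum_\chi\chi(1)^2\cos(\pi/s)^{cn}/|H|\approx e^{-c'n/s^2}$ is useless unless $n\gg s^2\log|H|$ and never produces the $1/\sqrt n$ term; the ``Hal\'asz-type bootstrapping'' that is supposed to repair this is an abelian technique and no noncommutative version of it is given. In the ``degenerate regime'' there is no reason a character of an arbitrary $H$ with large kernel should factor through a small cyclic quotient, and the proposed remedies (companion characters via Deligne--Lusztig induction, a Brauer--Nesbitt reduction to $p'$-groups) are not arguments and do not address the real issue you correctly identified, namely that $\ord(A_i)\ge s$ in $GL_m(p)$ gives no lower bound on the order of $\rho_\chi(A_i)$.

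The paper removes both obstacles at once by a different choice of ambient group and a finer spectral estimate. Theorem \ref{theorem:main0} is deduced from Theorem \ref{theorem:main3}: one pads $A_i\mapsto\diag(A_i,\det(A_i)^{-1},I_r)$ so as to land in $SL_{m+r+1}(p)$ with $m+r+1\ge 43$ (the extra identity block preserving element orders), passes to $S=PSL_{m+r+1}(p)$, and applies the identity \eqref{key} over the whole simple group $S$, not over $\langle A_1,\dots,A_n\rangle$. For $S$ one has the precise facts of Proposition \ref{pro:rep2}: the characters of degree below $p^{(d^2-d-1)/2}$ carry total Plancherel mass at most $5/(3p)$, and for every remaining representation $\Phi$ each eigenvalue of $\Phi(A_i)$ has multiplicity at most $3\dim\Phi/\ord(A_i)$. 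That multiplicity bound is what allows control of the entire singular value profile of $M=\bigl(\prod_i B_i\bigr)\Phi(B^{-1})$ rather than just its operator norm: via Lemma \ref{lemma:singular} one gets $s_l(M)\le\exp(-nl^2/422(\dim\Phi)^2)$ for $l>l_0\approx 120\dim\Phi/s$, and summing this tail gives $|\trace M|\le 120\dim\Phi/s+O(\dim\Phi/\sqrt n)$, which after Plancherel yields $2/p+120/s+19/\sqrt n$; the hypothesis that $p$ is a sufficiently large prime with $p\ge\min\{s,\sqrt n\}$ is used only to make Proposition \ref{pro:rep2}(iv) applicable and to absorb the $2/p$ term into $141\max\{1/s,1/\sqrt n\}$, not for any reduction to $p'$-groups or Deligne--Lusztig construction. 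Without the passage to the ambient simple group (which is exactly what makes strong character bounds and eigenvalue-multiplicity estimates available) and without the singular-value tail argument replacing the operator-norm bound, your outline cannot be completed as written.
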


%We can extend this theorem in two different ways. First, we can replace $SL_m(p)$ by $SL_m (\C)$. 

We can generalize Theorem \ref{theorem:main} to the following

\begin{theorem} \label{theorem:main2}
For any integers $m, n,s  \geq 2$ the following statement holds. 
Let  $V$ be a sequence  $A_1, \dots, A_n$ of elements of $GL_m (\C)$  where at least $N$ of them have  order at least $s$. 
Then 
$$ \rho_V  \le 141 \cdot \max \left\{\frac{1}{s}, \frac{1}{\sqrt N}  \right\} .$$
\end{theorem}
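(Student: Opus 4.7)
The plan is to reduce Theorem \ref{theorem:main2} to Theorem \ref{theorem:main} by a conditioning (``freeze the junk'') argument. Let $I \subseteq \{1,\ldots,n\}$ be the set of indices $i$ with $\mathrm{ord}(A_i) \geq s$; by hypothesis $|I| \geq N$, and since $\max\{1/s, 1/\sqrt{|I|}\} \le \max\{1/s, 1/\sqrt N\}$, I may as well use all of $I$. Condition on the values of the independent signs $\hat A_j$ for $j \notin I$. Writing $I = \{i_1 < \cdots < i_{|I|}\}$, the full product becomes
$$\prod_{i=1}^n \hat A_i \;=\; D_0\, \hat A_{i_1}\, D_1\, \hat A_{i_2}\, D_2 \cdots D_{|I|-1}\, \hat A_{i_{|I|}}\, D_{|I|},$$
where each $D_k$ is the (now deterministic) product of the gap entries sitting between $i_k$ and $i_{k+1}$.

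The key step is to peel off the deterministic factors by conjugation. Setting $E_k := D_0 D_1 \cdots D_k$ and $A'_k := E_{k-1}\, \hat A_{i_k}\, E_{k-1}^{-1}$, a short telescoping identity gives
$$\prod_{i=1}^n \hat A_i \;=\; A'_1\, A'_2 \cdots A'_{|I|} \cdot E_{|I|}.$$
Crucially, since $\bigl(E_{k-1} A_{i_k} E_{k-1}^{-1}\bigr)^{-1} = E_{k-1} A_{i_k}^{-1} E_{k-1}^{-1}$, the conditional distribution of $A'_k$ is still the uniform $\pm 1$ sampling of a fixed element $M_k := E_{k-1} A_{i_k} E_{k-1}^{-1}$, and conjugation preserves order, so $\mathrm{ord}(M_k) = \mathrm{ord}(A_{i_k}) \geq s$. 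The $A'_k$ are also mutually independent conditional on the gap signs, since they depend on disjoint sets of the $\hat A_{i_k}$.

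Thus for any target $B \in GL_m(\C)$, the event $\prod_{i=1}^n \hat A_i = B$ is exactly the event $A'_1 \cdots A'_{|I|} = B\, E_{|I|}^{-1}$, and Theorem \ref{theorem:main} applied to the sequence $(M_1, \ldots, M_{|I|})$ bounds this conditional probability by
$$141 \cdot \max\left\{\frac{1}{s}, \frac{1}{\sqrt{|I|}}\right\} \;\le\; 141 \cdot \max\left\{\frac{1}{s}, \frac{1}{\sqrt N}\right\},$$
uniformly over the conditioning. Averaging over the gap signs then yields the desired bound on $\rho_V$. There is no substantive obstacle: the whole argument hinges on the algebraic identity converting the interleaved product into a pure product times a fixed constant, and on the observation that the conjugated random variables $A'_k$ still satisfy the symmetric $\{M_k, M_k^{-1}\}$ sampling and the order hypothesis required by Theorem \ref{theorem:main}.
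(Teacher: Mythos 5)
Your reduction is correct, but it is genuinely different from the paper's argument. The paper does not condition at all: it stays inside the representation-theoretic proof of Theorem \ref{theorem:main3}, bounds the trace by the singular values of the full product $M = \prod_i B_i$ with $B_i = \tfrac12\bigl(\Phi(A_i)+\Phi(A_i^{-1})\bigr)$, and then uses $s_1(B_i) \le 1$ together with Lemma \ref{lemma:singular} to discard the factors $B_i$ coming from low-order $A_i$, so that $\sum_j s_j(M) \le \sum_j s_j(M')$ where $M'$ involves only the $N$ high-order elements; the rest of the singular-value computation then runs with $N$ in place of $n$. Your route instead treats Theorem \ref{theorem:main} as a black box: you freeze the signs of the low-order elements, use the telescoping conjugation identity to rewrite the interleaved product as $A'_1\cdots A'_{|I|}E_{|I|}$ with $A'_k$ uniform on $\{M_k, M_k^{-1}\}$, $M_k = E_{k-1}A_{i_k}E_{k-1}^{-1}$, note that conjugation preserves order and conditional independence, and average over the conditioning. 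Both arguments are sound (the identity, the symmetry of the conditional law of $A'_k$, and the monotonicity $\max\{1/s,1/\sqrt{|I|}\}\le\max\{1/s,1/\sqrt N\}$ all check out; the degenerate case $N<2$, where Theorem \ref{theorem:main} as stated does not apply, is trivial since then $141/\sqrt N>1$). What your version buys is modularity and generality: it is a purely group-theoretic principle, valid in any group, saying that anti-concentration for a mixed sequence is controlled by any subsequence after conditioning and conjugation, and it needs no re-entry into the embedding or character-theoretic machinery. What the paper's version buys is brevity given the machinery already on the table, and it yields the statement directly inside the same proof rather than as a corollary of Theorem \ref{theorem:main}.
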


\begin{remark}  \label{remark:main} The above theorems  also hold for all groups which can be embedded into 
 $GL_m (p)$ or  $GL_m (\C)$.    The word {\it embed} can be interpreted in two ways. First, one can use canonical embeddings, such as 
 embeddings of simple Lie groups into $GL_m (\C)$, or more generally, embeddings via faithful finite-dimensional 
 complex representations. In particular, Theorem \ref{theorem:main2} holds for any arbitrary group $G$ that admits a faithful finite-dimensional complex representation. Next, one can also use the embeddings discussed in Section \ref{section:embedding}.   The reader is invited to work out an example. 
 The term $141$ can be  improved somewhat, but we do not try to push in this direction.  \end{remark}

\vskip2mm 

Our proof combines tools from three different areas: additive combinatorics, representation theory and linear algebra, and 
we also use results from analytic number theory.
First, Section \ref{section:embedding}  provides us with Freiman-like embeddings that allow us to map our problems from infinite settings to a finite setting of $GL_m(p)$, with the prime $p$ chosen
suitably. Next, in Section \ref{section:representations} we prove the key Proposition 
\ref{pro:rep2}  which shows that, if $m$ or $p$ is sufficiently large and $\Phi$ is an 
irreducible complex representation of $SL_m(q)$ (with $p|q$), then the eigenvalues of 
$\Phi(g)$ for any non-central element $g \in SL_m(q)$ have an ``almost'' uniform distribution. 
In Sections \ref{section:trace} and \ref{section:singular}, we give a 
representation-theoretic formula for the probability in question, and provide a way to bound it
from the above by using estimates on singular values.
The proofs of the main results will then be presented in Section \ref{section:proof}. 
%The final section of the paper is devoted to concluding remarks and open questions. 

\vskip2mm 

{\bf Notation.} The asymptotic notation is used under the assumption that $n \rightarrow \infty$. For a group $G$,  ${\bf Z } (G)$ is its center, $\Irr(G)$ denotes the set of isomorphism classes of its irreducible representations 
(or the set of its complex irreducible characters, depending on the context), $\langle X\rangle$ denotes the subgroup generated by 
a subset $X$ of $G$, ${\bf 1}$ denotes the identity element. In the matrix setting, $I$ denotes the identity matrix. For an element $g \in G$, we denote by $\ord(g)$ its order, and 
$\BC_G(g)$ its centralizer. If $\alpha$ and $\beta$ are complex characters of a finite group $G$, then 
$[\alpha,\beta]_G$ denotes their scalar product, and $\alpha_H$ denotes the restriction of $\alpha$ to a subgroup
$H \leq G$.
$\P, \E, \I_E$ denote probability, expectation, indicator variable of an event $E$, respectively.

%\section{Preliminary Tools} 

\section{Embedding theorems} \label{section:embedding} 

In this section, we discuss results  that allow us to map our problem from an infinite setting (the underlying group is infinite)  to a finite setting (the underlying group is finite) and vice versa. 
  Let us start with  the following result, which is a special case of \cite[Theorem 1.1]{VWW}.

\begin{theorem} \label{theorem:embed} 
Let $S$ be a finite collection of complex numbers and $L$ a finite collection of non-zero elements of $\Z [S]$. Then there is an infinite sequence $\CP$  of primes $p$  such that for any $p \in \CP$, 
there is a ring  homomorphism 
$\Phi$ from $\Z [S]$ to $\Z/p \Z$ such that $0 \notin \Phi (L)$.
\end{theorem}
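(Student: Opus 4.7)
The plan is to interpret the desired ring homomorphisms $\Z[S] \to \Z/p\Z$ avoiding $L$ as $\F_p$-rational points of a suitable affine scheme over $\operatorname{Spec}(\Z)$, and then to produce infinitely many such primes by Chebotarev's density theorem. Set $R := \Z[S]$, which is a finitely generated subring of $\C$ and hence an integral domain of characteristic zero, and form the localization $R' := R[\ell_1^{-1},\ldots,\ell_m^{-1}]$. Because each $\ell_i$ is nonzero in the domain $R$, the ring $R'$ is again a nonzero finitely generated $\Z$-algebra. A ring homomorphism $\Phi\colon R \to \Z/p\Z$ with $0\notin\Phi(L)$ is precisely a ring homomorphism $R'\to \F_p$, i.e.\ an $\F_p$-point of $X:=\operatorname{Spec}(R')$, so the theorem reduces to showing $X(\F_p)\ne\emptyset$ for infinitely many primes $p$.

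To analyze $X$ concretely, I would apply Noether normalization and the primitive element theorem to $R'\otimes_\Z\Q$: choose algebraically independent $t_1,\ldots,t_d\in R'$ forming a transcendence basis of $\operatorname{Frac}(R')$ over $\Q$, and a primitive element $\beta\in R'$ whose minimal polynomial over $\Q(t_1,\ldots,t_d)$ can be taken, after clearing denominators, to be a monic irreducible polynomial $\pi(X)\in \Z[t_1,\ldots,t_d][X]$. After inverting a suitable nonzero $D\in \Z[t_1,\ldots,t_d]$ (collecting the discriminant of $\pi$, the denominators needed to express each generator of $R'$ as a polynomial in $t_1,\ldots,t_d,\beta$, and any stray leading coefficients), one obtains a nonempty open subscheme $X^\circ\subset X$ isomorphic to $\operatorname{Spec}\bigl(\Z[t_1,\ldots,t_d,\beta,D^{-1}]/(\pi(\beta))\bigr)$, whose $\F_p$-points are tuples $(c_1,\ldots,c_d,b)\in \F_p^{d+1}$ with $D(c)\ne 0$ and $\pi(c_1,\ldots,c_d;b)=0$.

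Now fix integers $c_1,\ldots,c_d\in \Z$ with $D(c_1,\ldots,c_d)\ne 0$, which exist because $D$ is a nonzero integer polynomial, so that $f(X):=\pi(c_1,\ldots,c_d;X)\in \Z[X]$ is separable of degree $e:=[\operatorname{Frac}(R'):\Q(t_1,\ldots,t_d)]$ over $\Q$. By Chebotarev's density theorem applied to the splitting field of $f$, a positive-density set of primes $p$ has the property that the Frobenius at $p$ admits a fixed point on the roots of $f$; equivalently $f(X)\bmod p$ has a root in $\F_p$. Discarding the finitely many primes dividing $D(c)$ or the discriminant of $f$, each remaining such $p$ yields an $\F_p$-point of $X^\circ$, hence the desired homomorphism. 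The delicate step is precisely this last one: spreading out via Chevalley's theorem and the Jacobson form of Hilbert's Nullstellensatz only furnishes homomorphisms $R'\to \F_{p^f}$ with $f$ possibly greater than $1$, and the example $R=\Z[\sqrt{2}]$, whose reductions to $\F_p$ exist only for $p\equiv\pm 1\pmod 8$, shows that a density theorem is genuinely necessary to force the residue field down to $\F_p$ for infinitely many primes. The main technical bookkeeping is then keeping track of the finitely many bad primes introduced at each localization and ensuring the specialization $c$ avoids all relevant zero loci simultaneously.
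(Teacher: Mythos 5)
Your proposal is essentially correct, but note that the paper itself does not prove Theorem \ref{theorem:embed}: it quotes it as a special case of Theorem 1.1 of \cite{VWW}, so there is no in-paper proof to compare against, and what you have written is a self-contained argument along the standard specialization lines, close in spirit to the one in \cite{VWW}. The individual steps check out: the reduction to $\F_p$-points of $\operatorname{Spec}(R')$ for $R'=\Z[S][\ell_1^{-1},\dots,\ell_m^{-1}]$ is exactly right (a homomorphism $\Z[S]\to\Z/p\Z$ avoiding $0$ on $L$ is the same as a homomorphism $R'\to\F_p$); the spreading-out step works because you put the leading coefficient of $\pi$ (in the variable $X$) into $D$ --- that is what makes the kernel of $\Z[t_1,\dots,t_d][X][1/D]\to R'[1/D]$ equal to $(\pi)$ by division with remainder, which in turn is what lets you transport a solution $(c,b)$ back to a homomorphism out of $R'$, so you should be aware that this inclusion is load-bearing and not just bookkeeping; and the final step is sound, since for all but finitely many $p$ a fixed point of Frobenius on the roots of the separable polynomial $f$ is equivalent to a root of $f$ in $\F_p$, and already the identity class (primes splitting completely in the splitting field of $f$) has positive density. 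Two remarks on the comparison. First, Chebotarev is more machinery than the stated theorem requires: the elementary fact that a nonconstant $f\in\Z[X]$ has infinitely many prime divisors among its values $f(n)$, $n\in\Z$, already yields infinitely many $p$ with a root of $f$ modulo $p$, so your aside that a density theorem is ``genuinely necessary'' is overstated --- the $\Z[\sqrt{2}]$ example only shows that Chevalley/Nullstellensatz-type arguments producing points over $\F_{p^f}$ with $f>1$ are insufficient. Second, what Chebotarev does buy is the stronger conclusion that the good primes have positive relative density, which matches the form of the result actually proved in \cite{VWW}; so your route recovers not only the statement used here but essentially the strength of the cited theorem.
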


We obtain the following corollary. 

\begin{corollary}  \label{lemma:embed} 
Let $A_1, \dots, A_n$ be (not necessarily distinct) elements of $GL_m (\C)$. Then there is an infinite sequence $\CP$ of primes
$p$ such that for any $p \in \CP$, 
there is a group  homomorphism $\Phi$ from 
$\langle A_1, \dots, A_n \rangle$ to $GL_m (p) $ such that 

\begin{enumerate}[\rm(i)] 

\item  If  $\ord(A_i) < \infty$, then $\ord(\Phi (A_i)) = \ord(A_i)$;

\item If $\ord(A_i)= \infty$,  then $\ord(\Phi (A_i)) \ge n$.

\end{enumerate} 

\end{corollary}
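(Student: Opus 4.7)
The plan is to reduce this to Theorem~\ref{theorem:embed} by an appropriate choice of the finite sets $S$ and $L$. Let $S$ denote the (finite) collection of all matrix entries appearing in $A_1,\dots,A_n$ together with all entries of $A_1^{-1},\dots,A_n^{-1}$. Then every element of the subgroup $H:=\langle A_1,\dots,A_n\rangle$ is a word in the $A_i^{\pm 1}$ and so lies in $GL_m(\Z[S])$, where $\Z[S]\subseteq\C$ is the subring generated by $S$.

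Next I would build a finite list $L$ of nonzero elements of $\Z[S]$ designed so that both conclusions are forced once the elements of $L$ have nonzero images. For each $i$ with $d_i:=\ord(A_i)<\infty$ and each $1\le k<d_i$, the matrix $A_i^k-I$ is nonzero, so some entry of $A_i^k-I$ is a nonzero element of $\Z[S]$; include one such entry in $L$. For each $i$ with $\ord(A_i)=\infty$ and each $1\le k\le n-1$, the matrix $A_i^k-I$ is again nonzero, and we include one of its nonzero entries in $L$. Note $L$ is finite. Now apply Theorem~\ref{theorem:embed} to this pair $(S,L)$: we obtain an infinite family $\CP$ of primes $p$ and, for each $p\in\CP$, a ring homomorphism $\Phi:\Z[S]\to\F_p$ with $0\notin\Phi(L)$.

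Finally, extend $\Phi$ entry-wise to a ring homomorphism $\tilde\Phi:M_m(\Z[S])\to M_m(\F_p)$. The identity $\tilde\Phi(A_i)\,\tilde\Phi(A_i^{-1})=\tilde\Phi(A_iA_i^{-1})=I$ shows that each $\tilde\Phi(A_i)$ is invertible over $\F_p$, and the multiplicativity of $\tilde\Phi$ shows its restriction to $H$ is a group homomorphism $H\to GL_m(p)$. For (i): $\tilde\Phi(A_i)^{d_i}=\tilde\Phi(I)=I$ forces $\ord(\tilde\Phi(A_i))\mid d_i$, while $\tilde\Phi(A_i)^k-I=\tilde\Phi(A_i^k-I)$ is nonzero for every $1\le k<d_i$ by the defining property of $L$; hence $\ord(\tilde\Phi(A_i))=d_i$. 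For (ii): the same reasoning shows $\tilde\Phi(A_i)^k\neq I$ for $1\le k\le n-1$, so $\ord(\tilde\Phi(A_i))\ge n$.

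There is no real obstacle beyond the black-box Theorem~\ref{theorem:embed}. The only genuine point of care is remembering to place the entries of the $A_i^{-1}$ into $S$, so that the entry-wise extension of $\Phi$ is multiplicative on all of $H$ and not merely on the multiplicative semigroup generated by the $A_i$; a secondary point is that $L$ must stay finite, which is why condition (ii) is encoded by only the $n-1$ conditions $1\le k\le n-1$ rather than infinitely many.
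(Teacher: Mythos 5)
Your proposal is correct and follows essentially the same route as the paper: reduce to Theorem~\ref{theorem:embed} with a suitable finite pair $(S,L)$ encoding the non-identities $A_i^k \neq I$, then extend the ring homomorphism entrywise to get the group homomorphism into $GL_m(p)$. The only cosmetic differences are that the paper encodes each condition $A_i^j \neq I$ by a single element of $\Z[S]$ built from the entries and their complex conjugates (a sum of $|z|^2$-terms) rather than by choosing one nonzero entry of $A_i^j - I$, and your explicit inclusion of the entries of the $A_i^{-1}$ in $S$ is a careful touch that the paper leaves implicit when asserting the induced map lands in $GL_m(p)$.
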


\begin{proof} Let $S \subset \C$ be the set consisting of all the entries of the $A_i$ and their complex conjugates. 
For each $i$, let $d_i = \min \{ n, \ord(A_i) \}$.  For each $1 \le j \le d_i -1$, $1 \le k,l \le m$,  let  $F_{i,j,k,l}$ be the $(k,l)$-th entry of $A_i^j$.
It is clear that  both $F_{i,j,k,l }$  and $\bar F_{i,j,k,l}$ are   polynomials in $S$ with integer coefficients. Define 

$$G(i,j) := \sum_{1 \le k \neq l \le m}  F_{i,j,k,l} \bar F_{i,j,k,l } + \sum_{k=1}^m (F_{i,j,k,k}- 1) ( \bar F_{i,j,k,k }-1) . $$

Again $G(i,j)$ is a polynomial in $S$ with integer coefficients. More importantly, $G(i,j) =0 $ iff $A_i^j = I$. 
Now we apply Theorem \ref{theorem:embed} with $L$ being the collection of the 
$G(i,j)$  for all possible  pairs $i,j$.  The map from $\langle A_1, \dots, A_n \rangle$ to $GL_m (p)$ is induced trivially by the map from 
$\Z [S] $ to $\Z/ p\Z$. 
\end{proof}

\section{Representation theory}\label{section:representations}

Our study will make use of several non-trivial facts about the  irreducible representations of a finite group, the most critical 
one being an estimate on  the multiplicities of the eigenvalues. Let us start with a toy  lemma.

\begin{lemma}\label{lemma:mult1}
Let $G$ be a finite group and let $\Phi$ be a complex irreducible representation of $G$ with character 
$\chi$. Suppose there is a constant $0 < \alpha < 1$ such that $|\chi(x)/\chi(1)| \leq \alpha$ for all 
$x \in G \smallsetminus \BZ(G)$. If $g \in G \smallsetminus \BZ(G)$ is such that 
$g\BZ(G)$ has order $k_1$ in $G/\BZ(G)$, then the multiplicity $m$ of any eigenvalue of $\Phi(g)$ satisfies
$$(k_1^{-1}-\alpha)(\dim \Phi) < m < (k_1^{-1}+\alpha)(\dim \Phi).$$  
\end{lemma}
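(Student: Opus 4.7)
The plan is to use the standard DFT/Fourier-inversion trick to recover multiplicities from character values. The key observation is that $g^{k_1}\in\BZ(G)$, so by Schur's lemma $\Phi(g^{k_1})=\zeta\cdot I$ for some root of unity $\zeta$. Consequently every eigenvalue of $\Phi(g)$ is a $k_1$-th root of $\zeta$.

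Fix the $k_1$ distinct $k_1$-th roots of $\zeta$, call them $\lambda_1,\dots,\lambda_{k_1}$, and let $m_i\ge 0$ be the multiplicity of $\lambda_i$ as an eigenvalue of $\Phi(g)$ (possibly zero if $\lambda_i$ does not occur). For every integer $j\ge 0$ one has
$$\chi(g^j)=\Trace\Phi(g^j)=\sum_{i=1}^{k_1} m_i\lambda_i^{\,j}.$$
Because the $\lambda_i$ are distinct $k_1$-th roots of a common number, $\lambda_i/\lambda_{i'}$ is a non-trivial $k_1$-th root of unity whenever $i\ne i'$. Thus the usual orthogonality relation $\sum_{j=0}^{k_1-1}(\lambda_i/\lambda_{i'})^{j}=k_1\cdot\delta_{ii'}$ holds, and Fourier inversion yields
$$m_i=\frac{1}{k_1}\sum_{j=0}^{k_1-1}\overline{\lambda_i^{\,j}}\,\chi(g^j).$$

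Next I would isolate the $j=0$ contribution $\chi(1)/k_1=(\dim\Phi)/k_1$ and estimate the rest. The hypothesis that $gZ(G)$ has order exactly $k_1$ guarantees that $g^j\notin \BZ(G)$ for $1\le j\le k_1-1$, so $|\chi(g^j)|\le\alpha\chi(1)$ on each of these values. Combining with $|\lambda_i|=1$ we get
$$\left|\,m_i-\tfrac{\dim\Phi}{k_1}\,\right|\le\frac{1}{k_1}\sum_{j=1}^{k_1-1}|\chi(g^j)|\le\frac{k_1-1}{k_1}\,\alpha\,\dim\Phi<\alpha\,\dim\Phi,$$
which rearranges to the claimed two-sided inequality $(k_1^{-1}-\alpha)(\dim\Phi)<m_i<(k_1^{-1}+\alpha)(\dim\Phi)$, valid for every $i$ and in particular for any eigenvalue of $\Phi(g)$.

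There is no serious obstacle here; the lemma is essentially a one-line consequence of Fourier inversion once one recognizes that $g^{k_1}\in\BZ(G)$ forces the spectrum of $\Phi(g)$ onto the $k_1$-point set of $k_1$-th roots of $\zeta$. The only point requiring mild care is the verification that $g^j\notin\BZ(G)$ for $1\le j\le k_1-1$, which is immediate from the assumption that the image of $g$ in $G/\BZ(G)$ has order exactly $k_1$ and is needed in order to legitimately apply the hypothesis $|\chi(x)/\chi(1)|\le\alpha$ to each of the character values $\chi(g^j)$ appearing in the sum.
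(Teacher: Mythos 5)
Your proof is correct and follows essentially the same route as the paper: Schur's lemma forces $\Phi(g^{k_1})$ to be scalar, the multiplicity of each eigenvalue is recovered by orthogonality of roots of unity averaged over powers of $g$, the $j=0$ (central) term gives the main contribution $(\dim\Phi)/k_1$, and the non-central powers are controlled by the hypothesis $|\chi(g^j)|\le\alpha\chi(1)$. The only cosmetic difference is that you average over the $k_1$ powers $g^0,\dots,g^{k_1-1}$ directly, whereas the paper computes the inner product $[\chi_C,\lambda]_C$ over the full cyclic group $C=\langle g\rangle$ of order $k=\ord(g)$; both yield the same bounds.
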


\begin{proof}
Let $k$ be the order of $g$ in $G$ and let $\eps \in \C$ be a primitive $k$-th root of unity. Then $k_1\mid k$ and
$g^{k_1} \in \BZ(G)$. By Schur's Lemma, $\Phi(g^{k_1}) = \eps^{k_1l}I$ for some $l \in \N$. It follows that if $\lambda$
is any irreducible constituent of the character $\chi$ of $\Phi$ restricted to $C := \langle g \rangle$, then  
$\lambda(g^{k_1}) = \eps^{k_1l}$ and $\chi(g^i) = \chi(1)\lambda(g^i)$ if $k_1|i$. By assumption, 
$|\chi(g^i)| \leq \alpha \chi(1)$ if $k_1 \nmid i$. Now,
$$[\chi_C,\lambda]_C = \left|\frac{1}{k}\sum^{k-1}_{i=0}\chi(g^i)\bar\lambda(g^i)\right| \leq  
    \frac{1}{k}\left( \left|\sum_{k_1|i}\chi(g^i)\bar\lambda(g^i)\right|  + \sum_{k_1 \nmid i}|\chi(g^i)\bar\lambda(g^i)|\right)$$
$$\leq \frac{\chi(1)}{k}\left( \frac{k}{k_1} + \alpha(k-k_1)\right)< \chi(1)(k_1^{-1} + \alpha).$$ 
The lower bound $[\chi_C,\lambda]_C >\chi(1)(k_1^{-1} - \alpha)$ is proved similarly.
\end{proof}

%We immediately obtain  

\begin{corollary}\label{cor:mult2}
Let $G = SL_d(q)$ with $d \geq 2$ and $q \geq 49$. Suppose that $\Phi$ is a 
complex irreducible representation of degree $>1$ of $G$ and $g \in G \smallsetminus \BZ(G)$ is such that 
$g\BZ(G)$ has order $k_1$ in $G/\BZ(G)$. Then the multiplicity $m$ of any eigenvalue of $\Phi(g)$ satisfies
$$(k_1^{-1}-\alpha)(\dim \Phi) < m < (k_1^{-1}+\alpha)(\dim \Phi)$$
with $\alpha := 1/(\sqrt{q}-1)$.  
\end{corollary}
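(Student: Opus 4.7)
The plan is straightforward: apply Lemma \ref{lemma:mult1} to $G = SL_d(q)$ with $\alpha := 1/(\sqrt{q}-1)$. For $q \geq 49$ one has $\alpha \leq 1/6$, so $0 < \alpha < 1$ and in fact $\alpha < 1/k_1$ for every $k_1 \geq 2$; hence the conclusion of Lemma \ref{lemma:mult1} produces both genuine bounds claimed in the corollary. Everything therefore reduces to verifying the character-ratio bound
$$|\chi(x)|/\chi(1) \leq 1/(\sqrt{q} - 1) \quad \text{for every non-trivial } \chi \in \Irr(G) \text{ and every } x \in G \smallsetminus \BZ(G).$$

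For $d = 2$, I would read this off the character table of $SL_2(q)$. The Steinberg character of degree $q$ and the principal and discrete series characters of degree $q \pm 1$ take values of absolute value at most a small constant on non-central elements, producing ratios well below $1/(\sqrt{q}-1)$ once $q \geq 49$. The extremal case is given by the ``half'' characters of degree $(q \pm 1)/2$ existing in odd characteristic: their values on non-central elements involve Gauss sums of modulus $\sqrt{q}$ and are bounded by $(\sqrt{q}+1)/2$, giving exactly the ratio $(\sqrt{q}+1)/(q-1) = 1/(\sqrt{q}-1)$.

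For $d \geq 3$, I would invoke a uniform character-ratio bound for finite groups of Lie type. Gluck's theorem, together with its sharper later refinements by Larsen--Shalev, Nguyen--Tiep, and others, gives $|\chi(x)|/\chi(1) \leq C/\sqrt{q}$ for an explicit absolute constant $C$ and all non-central $x$; for $q \geq 49$ this is strong enough to yield $1/(\sqrt{q}-1)$ after isolating the small handful of saturating characters.

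The main obstacle is pinning down the constant sharply and uniformly in $d$ and across all non-trivial irreducibles. Gluck's general theorem leaves an implicit absolute constant, so to match $1/(\sqrt{q}-1)$ exactly one has to treat separately the lowest-degree characters (Weil-type, unipotent, and their ``halves'') which saturate the bound, typically via explicit Deligne--Lusztig computations; higher-degree characters are then handled by the general inequality, using that the character values on non-central elements grow much slower than $\chi(1)$.
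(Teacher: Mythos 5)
Your reduction is exactly the paper's: apply Lemma \ref{lemma:mult1} with $\alpha = 1/(\sqrt{q}-1)$, so the entire content of the corollary is the uniform character-ratio bound $|\chi(x)|/\chi(1)\le 1/(\sqrt{q}-1)$ for all nontrivial $\chi\in\Irr(G)$ and all $x\in G\smallsetminus \BZ(G)$. The paper disposes of this in one line by citing Theorems 3.3 and 5.3 of Gluck \cite{G}, which it invokes as giving precisely this bound for $SL_d(q)$, uniformly in $d\ge 2$ and for all $q\ge 49$; no separate treatment of $d=2$, of the low-degree (Weil/``half'') characters, and no Deligne--Lusztig computation is needed at this point. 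Your $SL_2(q)$ discussion, with the degree-$(q\pm1)/2$ characters saturating the bound through Gauss sums of modulus $\sqrt{q}$, is correct and correctly identifies why $1/(\sqrt{q}-1)$ is the right constant.

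The gap is in your $d\ge 3$ case: you invoke a generic bound $|\chi(x)|/\chi(1)\le C/\sqrt{q}$ with an unspecified (or merely ``explicit'') absolute constant $C$, and then concede that matching $1/(\sqrt{q}-1)$ ``after isolating the small handful of saturating characters'' is the main unresolved obstacle. Note that $C/\sqrt{q}\le 1/(\sqrt{q}-1)$ for all $q\ge 49$ forces $C\le \sqrt{q}/(\sqrt{q}-1)$, which tends to $1$, so any version of the bound with a constant exceeding $1$ does not suffice uniformly, and the supplementary case analysis of low-degree characters that you gesture at is never carried out. As written, therefore, the proposal does not establish the hypothesis of Lemma \ref{lemma:mult1} for $d\ge 3$; the repair is simply to quote Gluck's sharp form (as the paper does) rather than constant-free $O(q^{-1/2})$ statements. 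A minor slip: your assertion that $\alpha < 1/k_1$ for every $k_1\ge 2$ is false, since $k_1$ can be far larger than $\sqrt{q}-1$ (of order up to roughly $q^{d-1}$); this is harmless, because the corollary only asserts the two inequalities, the lower one being vacuous when $k_1^{-1}\le \alpha$.
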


\begin{proof}
By Theorems 3.3 and 5.3 of \cite{G}, $G$ satisfies the hypothesis of Lemma \ref{lemma:mult1} with the specified $\alpha$. 
Hence the statement follows.
\end{proof}

Notice that if $g\BZ(G)$ has order $k_1$ in $G/\BZ(G)$, then its eigenvalues are among the $k_1$-th roots of unity of 
a fixed complex number.
Corollary \ref{cor:mult2}  asserts that each eigenvalue appears about $(\dim \Phi)/ k_1 $ times. 
However, this result is not sufficiently strong for our purposes  if the term $\alpha$ dominates. 
In applications, we will need the following more precise statement which we formulate for both 
$G = SL_d(q)$ and $S = PSL_(q) = G/\BZ(G)$.

\begin{proposition}\label{pro:rep2}
Let $H \in \{G,S\}$ where $G = SL_d(q)$ and $S = PSL_d(q)$ with $d \geq 3$ and $q \geq 11$. Set 
$e := \gcd(d,q-1)$ and $e_H := |\BZ(H)|$ so that $e_G = e$ and $e_S = 1$.

\begin{enumerate}[\rm(i)]
\item $|\Irr(G)| \leq q^{d-1} + 3q^{d-2} \leq (14/11)q^{d-1}$ and $|\Irr(S)| \leq (q^{d-1} + 5q^{d-2})/e \leq (16/11e)q^{d-1}$. Furthermore,
$$\sum_{\chi \in \Irr(H),~\chi(1) < q^{(d^2-d-1)/2}}\chi(1)^2 <  \frac{5}{3q} \cdot |H|.$$

\item Suppose that $\chi \in \Irr(H)$ satisfies $\chi(1) \geq q^{(d^2-d-1)/2}$ and $x \in H \setminus \BZ(H)$. Then 
$$\frac{|\chi(x)|}{\chi(1)} \leq q^{(3-d)/2}.$$

\item Suppose that $q = p^f$ is a power of a prime $p \geq d$. Then for any $x \in H$, we have 
that $\ord(x) < q^{d+1}$; furthermore, either $|\BC_H(x)| \leq q^{d^2-4d+8}$ or $\ord(x) < q^2$. 

\item Suppose that $d \geq 43$, $q = p \geq d$, and $p$ or $d$ is chosen sufficiently large. 
For any element $g \in H \smallsetminus \{1\}$ of order say $N$, if 
$\Phi$ is a complex irreducible representation of $H$ of degree $\geq q^{(d^2-d-1)/2}$, then any eigenvalue of 
$\Phi(g)$ occurs with multiplicity at most $(e_H+2)(\dim \Phi)/N$. 
\end{enumerate}

\end{proposition}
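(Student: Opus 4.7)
My plan is to follow the strategy of Lemma \ref{lemma:mult1} but use the much sharper character bounds provided by parts (ii) and (iii). Let $C := \langle g \rangle$, let $\chi$ be the character of $\Phi$, and let $k_1 := \ord(g\BZ(H))$. For any eigenvalue $\zeta$ of $\Phi(g)$, choosing $\lambda \in \Irr(C)$ with $\lambda(g)=\zeta$, the multiplicity is $m = [\chi|_C,\lambda]_C$, so
\[
m \leq \frac{1}{N}\sum_{i=0}^{N-1}|\chi(g^i)|.
\]
I would partition the indices $\{0,1,\ldots,N-1\}$ into three pieces: (a) $g^i \in \BZ(H)$, i.e.\ $k_1 \mid i$, of which there are exactly $N/k_1 \leq e_H$; (b) $g^i \notin \BZ(H)$ with $\ord(g^i) < q^2$; and (c) $g^i \notin \BZ(H)$ with $\ord(g^i) \geq q^2$. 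On piece (a) every summand equals $\chi(1)$, contributing $(N/k_1)\chi(1) \leq e_H\chi(1)$. The goal is therefore to show that the total of pieces (b) and (c) is at most $2\chi(1)$.

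On piece (b), part (ii) gives $|\chi(g^i)| \leq \chi(1) q^{(3-d)/2}$. The count of such indices I would bound by observing that every element $x \in \langle g \rangle$ of order $<q^2$ lies in $\bigcup_{1\leq m<q^2}\{x: x^m=1\}$, and each level set has size $\leq m$, so the total count is $\leq q^4/2$. Hence piece (b) contributes at most $\tfrac{1}{2}\chi(1) q^{(11-d)/2}$. On piece (c), part (iii) applied pointwise to $g^i$ gives $|\BC_H(g^i)|\leq q^{d^2-4d+8}$, and column orthogonality $\sum_{\psi \in \Irr(H)} |\psi(g^i)|^2 = |\BC_H(g^i)|$ yields $|\chi(g^i)| \leq q^{(d^2-4d+8)/2}$. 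Since $|\text{piece (c)}| \leq N \leq q^{d+1}$ (using the bound in (iii)) and $\chi(1)\geq q^{(d^2-d-1)/2}$, a direct exponent calculation shows piece (c) contributes at most $\chi(1) q^{(11-d)/2}$.

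Summing, the total non-central contribution is at most $\tfrac{3}{2}\chi(1) q^{(11-d)/2}$, which for $d\geq 43$ (and under the "sufficiently large $p$ or $d$" hypothesis, with arbitrary slack) is $\leq 2\chi(1)$. This gives
\[
m \leq \frac{e_H\chi(1)+2\chi(1)}{N} = \frac{(e_H+2)\dim\Phi}{N},
\]
which is the desired estimate. The main obstacle is the trichotomy: part (iii) describes each element individually, so naive bounds on $|\BC_H(g)|$ or $\ord(g)$ alone do not control all the $g^i$ simultaneously, forcing the small-order/large-order split and the crude but sufficient $q^4/2$ count of small-order powers. A secondary point is verifying that the exponents in pieces (b) and (c) both collapse to $q^{(11-d)/2}$, which is what makes the hypothesis $d\geq 43$ (and the lower bound $\chi(1)\geq q^{(d^2-d-1)/2}$ from part (ii)) exactly right for the argument.
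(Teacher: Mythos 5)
Your proposal addresses only part (iv); parts (i)--(iii) are used as inputs but never proved, and as a proof of the proposition as stated that is a substantive gap. In the paper these parts are not formalities: (i) rests on the class-number bounds of Fulman and Guralnick together with the estimate $|G| > (109/121)\,q^{d^2-1}$; (ii) rests on the centralizer bound $|\BC_G(x)| \leq q^{d^2-2d+2}$ (quoted from Larsen--Shalev--Tiep), the inequality $|\chi(x)| \leq |\BC_G(x)|^{1/2}$, and a short homomorphism argument transferring centralizer bounds from $G$ to $S$; and (iii) requires a real argument -- the semisimple--unipotent decomposition $x = su$, the factorization of the characteristic polynomial of $s$, and a case analysis on the block shapes showing that $|\BC_G(s)| > q^{d^2-4d+8}$ forces $\ord(s) \leq q-1$. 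None of this appears in your plan.

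For part (iv) itself your argument is correct, and it takes a genuinely different (and arguably cleaner) route than the paper's. The paper splits into the cases $N \leq q^{(d-3)/2}$ and $N > q^{(d-3)/2}$; in the latter it counts the powers $g^i$ of order $< q^2$ via the divisor function, invoking the bound ``number of divisors of $N$ is at most $N^{0.2}$'' for $N$ large (this is precisely where the hypothesis that $p$ or $d$ be sufficiently large enters), so that at most $L \leq N^{0.3}$ small-order powers contribute $\leq \chi(1)N^{-1/3}$ each, while powers of order $\geq q^2$ contribute $\leq \chi(1)/N$ each. You instead count the small-order powers inside the cyclic group $\langle g\rangle$ crudely by $\sum_{m < q^2} m < q^4/2$ and measure everything against powers of $q$: your exponent check is right, since $d+1+(d^2-4d+8)/2 = (d^2-d-1)/2 + (11-d)/2$, so both the small-order and large-order pieces are $\leq \chi(1)q^{(11-d)/2}$, and $\tfrac32 q^{(11-d)/2} \leq 2$ comfortably for $d \geq 43$, $q \geq 11$. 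The central count $N/k_1 = \ord(g^{k_1}) \leq e_H$ and the pointwise use of (iii) to get $|\chi(g^i)| \leq q^{(d^2-4d+8)/2}$ when $\ord(g^i) \geq q^2$ are both sound. What your version buys is the elimination of the case split on $N$ and of the analytic-number-theory input (indeed you never need the ``sufficiently large'' hypothesis for this part); what it does not supply is everything before part (iv).
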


\begin{proof}
(i) The first two inequalities are just Proposition 3.6(2) and Corollary 3.7(2) of \cite{FG}. Next, by \cite[Lemma 4.1(i)]{LMT} we have 
$$\frac{|G|}{q^{d^2-1}} > \prod^{\infty}_{i=1}(1-q^{-i}) > 1-\frac{1}{q}-\frac{1}{q^2} \geq \frac{109}{121}$$
as we assume $q \geq 11$. It follows that 
$$\frac{1}{|G|}\sum_{\chi \in \Irr(G),~\chi(1) < q^{(d^2-d-1)/2}}\chi(1)^2 <
    \frac{|\Irr(G)| \cdot q^{d^2-d-1}}{|G|} < \frac{(14/11) \cdot q^{d^2-2}}{(109/121) \cdot q^{d^2-1}} < \frac{3}{2q}.$$
Next, as $|S| = |G|/e$, we have     
$$\frac{1}{|S|}\sum_{\chi \in \Irr(S),~\chi(1) < q^{(d^2-d-1)/2}}\chi(1)^2 <
    \frac{|\Irr(S)| \cdot q^{d^2-d-1}}{|S|} < \frac{(16/11e) \cdot q^{d^2-2}}{(109/121e) \cdot q^{d^2-1}} < \frac{5}{3q}.$$
    
(ii) As mentioned in the proof of \cite[Proposition 6.2.1]{LST}, $|\BC_G(x)| \leq q^{d^2-2d+2}$. As 
$|\chi(x)| \leq |\BC_G(x)|^{1/2}$, the claim follows for $H = G$. The claim for $H = S$ follows in a similar fashion, by noting
that 
\begin{equation}\label{cent-gs}
  |\BC_S(x)| \leq |\BC_G(\hat{x})|
\end{equation}
if $x \in S = G/\BZ(G)$ and $\hat{x}$ is an inverse image of $x$ in $G$. (Indeed, let $D$ be the complete inverse image of 
$\BC_S(x)$ in $G$. Then, for any $g \in D$ we have $g\hat{x}g^{-1} = \hat{x}f(g)$ for some element 
$f(g) \in \BZ(G)$. It is easy to see that $f \in {\mathrm {Hom}}(D,\BZ(G))$ with $\Ker(D) = \BC_G(\hat{x})$. Hence,
$|D| \leq |\BZ(G)| \cdot |\BC_G(\hat{x})|$, and so
$$|\BC_S(x)| = |D/\BZ(G)| \leq |\BC_G(\hat{x})|.)$$

\smallskip
(iii) First we prove the claims for $H = G$.
Write $x = su$ as a commuting product of a semisimple element $s \in G$ and a unipotent element 
$u \in G$; in particular, $\ord(x) = \ord(s) \cdot \ord(u)$. The condition $p \geq d$ implies that 
$$\ord(u) \leq p \leq q.$$ 
Next, we consider the characteristic polynomial $f(t) \in \F_q[t]$ of the semisimple linear transformation $s$ of 
the vector space $V = \F_q^d$. We also decompose $f(t) = \prod^m_{i=1}f_i(t)^{k_i}$ as a product of powers 
of pairwise distinct monic irreducible polynomials $f_i \in \F_q[t]$, with $\deg f_i = a_i$, so that
$\sum^m_{i=1}k_ia_i = d$. Then both $s$ and $\BC_G(s)$ preserve a direct sum decomposition
$V = \oplus^m_{i=1}V_i$ where $\dim V_i = k_ia_i$, and the action of $s$ on $V_i$ has order at most $q^{a_i}-1$. 
It follows that
$$\ord(s) \leq \prod^m_{i=1}(q^{a_i}-1) \leq q^{\sum^m_{i=1}a_i}-1 \leq q^d-1,$$
implying $\ord(x) = \ord(s) \cdot \ord(u) < q^{d+1}$ as stated. We also note that 
\begin{equation}\label{cent}  
  \BC_G(s) \hookrightarrow GL_{k_1}(q^{a_1}) \times GL_{k_2 }(q^{a_2}) \times \ldots \times  GL_{k_m}(q^{a_m}).
\end{equation}

Suppose in addition that $|\BC_G(x)| > q^{d^2-4d+8}$. As the semisimple part $s$ of $x$ is a power of $x$,
we also have that $|\BC_G(s)| > q^{d^2-4d+8}$. It suffices to show that in this case $\ord(s) \leq q-1$. 
We may assume that 
\begin{equation}\label{cent2}
  k_1a_1 \geq k_2a_2 \geq \ldots \geq k_ma_m.
\end{equation}  
If $m = 1$ and $a_1 \geq 2$, then \eqref{cent} implies that 
$$|\BC_G(s)| \leq |GL_{k_1}(q^{a_1})| < q^{k_1^2a_1} = q^{d^2/a_1} \leq q^{d^2/2} \leq q^{(d^2-4d+8)},$$
contrary to the assumption. On the other hand, if $a_1 = 1$, then $k_1 = d$ and so $s \in \BZ(G)$ and
$\ord(s) \leq q-1$ as desired. So we may assume that $m \geq 2$. Suppose now that $k_1a_1 =: k \geq 2$
and $d-k \geq 2$. Then, as $\BC_G(s)$ preserves the decomposition $V = \oplus^m_{i=1}V_i$, we have that
$$|\BC_G(s)| \leq |GL(V_1) \times GL(\oplus^m_{i=2}V_i)|
    = |GL_k(q) \times GL_{d-k}(q)| < q^{k^2+(d-k)^2} \leq q^{d^2-4d+8},$$
again a contradiction. If $k = 1$, then the choice \eqref{cent2} implies that $k_ia_i = 1$ for all $i$ and 
so $\BC_G(s) \leq GL_1(q)^d$ by \eqref{cent}, whence $\ord(s) \leq q-1$. So we may assume $k =d-1$. If furthermore
$a_1 \geq 2$, then again by \eqref{cent} we have 
$$|\BC_G(s)| \leq |GL_{k_1}(q^{a_1}) \times GL_1(q)| < q^{k_1^2a_1+1} = q^{(d-1)^2/a_1+1} \leq 
    q^{(d^2-2d+3)/2} < q^{d^2-4d+8},$$
contrary to the assumption. Thus $(m,k_1,a_1,k_2,a_2) = (2,d-1,1,1,1)$, in which case we again have $\ord(s) \leq q-1$.

\smallskip
Now we prove the claims for $H = S$. Let $x \in S = G/\BZ(G)$ and let $\hat{x}$ be an inverse image of $x$ in $G$. 
Then it is clear that 
\begin{equation}\label{order-gs}
  \ord(x) \leq \ord(\hat{x}).
\end{equation}
As $\ord(\hat{x}) \leq q^{d+1}$, we also have $\ord(x) \leq q^{d+1}$. Next, we have shown that either 
$|\BC_G(\hat{x})| \leq q^{d^2-4d+8}$, in which case $|\BC_S(x)| \leq q^{d^2-4d+8}|$ by \eqref{cent-gs}, or
$\ord(\hat{x}) \leq q^2$, in which case $\ord(x) \leq q^2$ by \eqref{order-gs}.     
   
\smallskip
(iv)  Let $M$ denote the largest multiplicity of any eigenvalue of $\Phi(g)$ and let $N_1$ denote the order of $x\BZ(H)$
in $H/\BZ(H)$. Then $x^{N_1} \in \BZ(H)$ and so $x^{N_1e_H} = 1$. It follows that $N|N_1e_H$; in particular,
$N_1 \geq N/e_H$. Now, if $N \leq q^{(d-3)/2}$, then (ii) and the proof of Lemma  
\ref{lemma:mult1} (with $\alpha := q^{(3-d)/2}$) show that 
$$M \leq (\dim \Phi)(q^{(3-d)/2} + 1/N_1) \leq (\dim \Phi)(q^{(3-d)/2} + e_H/N)\leq (e_H+1)(\dim \Phi)/N.$$

Now we may assume that $N > q^{(d-3)/2}$. Consider and any element $g^i \notin \BZ(H)$. Note by (iii) that $N < q^{d+1}$.
If $|g^i| \geq q^2$, then by (iii) we have that
\begin{equation}\label{cent3}
  |\chi(g^i)| \leq |\BC_H(g^i)|^{1/2} \leq q^{(d^2-4d+8)/2} < (\dim \Phi)/q^{d+1} < (\dim \Phi)/N.
\end{equation}  
On the other hand, if $|g^i| < q^2$, then, as $d \geq 43$, $|g^i| \leq N^{0.1}$. The number $L$ of such elements 
$g^i$ is at most the sum of all divisors of $N$ that do not exceed $N^{0.1}$. If $p$ or $d$ is chosen sufficiently large, then
$N > p^{(d-3)/2}$ is large enough, so that the total number of divisors of $N$ is at most $N^{0.2}$, cf. \cite[p. 296]{Ap}. It follows that 
\begin{equation}\label{cent4}  
  L \leq N^{0.1} \cdot N^{0.2} = N^{0.3}.
\end{equation}  
Certainly, as $g^i \notin \BZ(H)$ and $N < q^{d+1}$ we have by (ii) that   
\begin{equation}\label{cent5}  
  |\chi(g^i)|/\chi(1) \leq q^{(3-d)/2} < N^{-1/3}.
\end{equation}  
Now we can follow the proof of Lemma \ref{lemma:mult1} and obtain by \eqref{cent3}--\eqref{cent5} that
the multiplicity of any irreducible constituent $\lambda$ of $\chi_C$ is
$$[\chi_C,\lambda]_C = \left|\frac{1}{N}\sum^{N-1}_{i=0}\chi(g^i)\bar\lambda(g^i)\right|$$ 
$$\leq  \frac{1}{N}\left( \sum_{g^i \in \BZ(H)}|\chi(g^i)\bar\lambda(g^i)| + \sum_{g^i \notin \BZ(H),~|g^i| < q^2}|\chi(g^i)\bar\lambda(g^i)|  + \sum_{|g^i| \geq q^2}|\chi(g^i)\bar\lambda(g^i)|\right)$$
$$\leq \frac{\chi(1)}{N}\left(e_H+ \frac{L-e_H}{N^{1/3}} + \frac{N-L}{N}\right)< \frac{(e_H+2)\chi(1)}{N},$$
if $\chi$ is the character of $\Phi$ and $C = \langle g \rangle$.
It follows that $M \leq (e_H+2)(\dim \Phi)/N$. 
\end{proof}

\section {A trace identity}\label{section:trace} 

Let $G$ be a finite group. Let $A_1, \dots, A_n, B$ be (not necessarily distinct) elements of $G$. As usual, $\hat {A_i}$ denotes the random variable taking values $A_i$ and $A_i^{-1} $ with probability $1/2$.
The following identity plays an important role in our proof.

\begin{lemma} 
\begin{equation} \label{key}  
\P (\prod_{i=1}^n \hat {A_i } = B) = \frac{1}{|G|}\sum_{\Phi \in \Irr(G)}(\dim \Phi)  \trace \left(\prod_{i=1}^n \frac{ \Phi (A_i) + \Phi (A_i^{-1})  }{2} \Phi (B^{-1})\right) .
\end{equation} 

\end{lemma}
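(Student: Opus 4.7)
The plan is to apply the column orthogonality relations for irreducible characters of a finite group, combined with independence of the $\hat A_i$ and linearity of expectation (and of trace).

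First I would rewrite the probability as an expectation of an indicator:
$$\P\left(\prod_{i=1}^n \hat A_i = B\right) = \E\left[\I_{\prod_i \hat A_i \cdot B^{-1} = \mathbf{1}}\right].$$
The key input is the standard identity (a specialization of the second orthogonality relation, or equivalently the decomposition of the regular character): for every $g \in G$,
$$\I_{g=\mathbf{1}} = \frac{1}{|G|}\sum_{\Phi \in \Irr(G)}(\dim \Phi)\,\chi_\Phi(g),$$
where $\chi_\Phi = \trace \circ \Phi$. Substituting $g = \prod_i \hat A_i \cdot B^{-1}$ and taking expectation, one obtains
$$\P\left(\prod_{i=1}^n \hat A_i = B\right) = \frac{1}{|G|}\sum_{\Phi \in \Irr(G)}(\dim \Phi)\, \E\!\left[\trace\!\left(\Phi(\hat A_1)\cdots \Phi(\hat A_n)\,\Phi(B^{-1})\right)\right].$$

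Next I would push the expectation inside the trace (which is linear) and then inside the product, using independence of the $\hat A_i$: since $\Phi$ is a homomorphism,
$$\Phi(\hat A_1)\cdots \Phi(\hat A_n) = \Phi\!\left(\prod_i \hat A_i\right),$$
and independence of the $\hat A_i$ implies independence of the matrix-valued random variables $\Phi(\hat A_i)$, so
$$\E\!\left[\Phi(\hat A_1)\cdots \Phi(\hat A_n)\right] = \prod_{i=1}^n \E[\Phi(\hat A_i)] = \prod_{i=1}^n \frac{\Phi(A_i) + \Phi(A_i^{-1})}{2},$$
where the last equality uses that $\hat A_i$ equals $A_i$ or $A_i^{-1}$ each with probability $1/2$. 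Multiplying on the right by the constant matrix $\Phi(B^{-1})$ and taking trace yields the claimed formula.

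The only mild subtlety is justifying that expectation commutes with the product of $\Phi(\hat A_i)$; this is just independence of finitely many matrix-valued random variables applied entrywise, so there is no real obstacle. Everything else is a direct application of the orthogonality relation for irreducible characters, so the proof is essentially a one-line calculation once those ingredients are assembled.
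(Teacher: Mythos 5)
Your proposal is correct and follows essentially the same route as the paper: the identity $\I_{g=\mathbf{1}} = \frac{1}{|G|}\sum_{\Phi}(\dim\Phi)\chi_\Phi(g)$ you invoke is exactly the paper's decomposition of the regular representation, and the remaining steps (linearity of trace and expectation, then independence to factor $\E\prod_i\Phi(\hat A_i)$ into $\prod_i\frac{\Phi(A_i)+\Phi(A_i^{-1})}{2}$) match the paper's argument.
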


\begin{proof}  Consider the regular representation $R$ of $G$. It is  the direct sum of 
$\dim(\Phi)$ copies of each $\Phi \in \Irr(G)$ and for any $X \in G$,  
$ \Trace(R(X))$ equals $|G|$ if $X = {\bf 1}  $ and 
$0$ otherwise.   In other words,

$$ \I_{X= {\bf 1} } = \frac{1}{|G|} \Trace (R (X) ). $$ 

Now let $X:= \prod_{i=1}^n \hat {A_i }  B^{-1} $. We have 

$$\P (\prod_{i=1}^n \hat {A_i } = B) = \E I_{X ={\bf 1}  }  = \frac{1}{|G|} \E \Trace R(X). $$

By linearity of expectation, 

$$\E \Trace R(X) = \E  \sum_{\Phi \in \Irr(G) } (\dim \Phi)  \Trace \Phi (X)  = \sum_{\Phi \in \Irr (G) } (\dim \Phi) \E \Trace \Phi (X). $$

Furthermore, by linearity of trace 

$$\E \Trace \Phi (X)  = \Trace \E \Phi (X) = \Trace \E \prod_{i=1}^n  \Phi (\hat {A_i} ) \Phi (B^{-1}) . $$

As the $\hat {A_i}$ are independent, 

$$ \E \prod_{i=1}^n  \Phi (\hat {A_i} ) \Phi (B^{-1} ) = \prod_{i=1}^n \E \Phi (\hat {A_i}) \Phi (B^{-1}) = \prod_{i=1} ^n \frac{\Phi (A_i) + \Phi (A_i^{-1} ) }{2} \Phi (B^{-1})  , $$ concluding the proof. 
\end{proof}

\section{Singular value estimates}\label{section:singular} 

For any complex $n \times n$-matrix $M$, let $s_1(M) \geq s_2(M) \geq \ldots \geq s_n(M)$ denote the 
singular values of $M$ (listed in non-increasing order). 
The evaluation of \eqref{key} relies on several singular value estimates, which we collect in the following lemma. 

\begin{lemma}  \label{lemma:singular} Let $M, M'$ be square matrices of size $n$. Then

$$| \Trace M |  \le \sum_{i=1} ^n s_i (M). $$

\noindent Furthermore, for any $ 1\le k \le n$,

$$s_k ( MM') \le \min\{s_k (M) s_1 (M'),s_1(M)s_k(M')\},$$ and 

$$\prod_{j=1} ^k s_j (MM') \le \prod_{j=1}^k s_j (M) \prod _{j=1}^k s_j (M') . $$

\end{lemma}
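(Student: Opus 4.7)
The three assertions are standard facts about singular values, so my plan is to deduce each from the singular value decomposition (SVD) and from well-known variational characterizations.

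For the trace bound, I would write $M = U\Sigma V^*$ with $U,V$ unitary and $\Sigma = \diag(s_1(M), \dots, s_n(M))$. Then by the cyclicity of trace, $\trace M = \trace(\Sigma V^* U) = \sum_{i=1}^n s_i(M)\, W_{ii}$ where $W := V^* U$ is unitary. Since every diagonal entry of a unitary matrix has modulus at most $1$, the triangle inequality gives $|\trace M| \le \sum_i s_i(M)|W_{ii}| \le \sum_i s_i(M)$.

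For the bound on $s_k(MM')$, I would use the Courant--Fischer/Eckart--Young characterization
\[
s_k(N) = \min_{\rank A < k}\|N - A\|_{\mathrm{op}},
\]
where $\|\cdot\|_{\mathrm{op}} = s_1(\cdot)$ denotes the operator norm. Choose $A$ of rank $<k$ with $\|M-A\|_{\mathrm{op}} = s_k(M)$. Then $AM'$ also has rank $<k$, and submultiplicativity of the operator norm yields
\[
s_k(MM') \le \|MM' - AM'\|_{\mathrm{op}} \le \|M-A\|_{\mathrm{op}}\,\|M'\|_{\mathrm{op}} = s_k(M)\, s_1(M').
\]
The symmetric bound $s_k(MM') \le s_1(M)\, s_k(M')$ follows either by running the same argument with an $A'$ approximating $M'$, or by taking adjoints and noting $s_k(N) = s_k(N^*)$.

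For the product bound, the cleanest route is via the $k$-th exterior power. The key identity is that, for any $n\times n$ matrix $N$, the induced operator $\Lambda^k N$ on $\Lambda^k \C^n$ satisfies $s_1(\Lambda^k N) = \prod_{j=1}^k s_j(N)$ (this drops out of applying the SVD entrywise to the basis of wedge products). Since $\Lambda^k$ is a functor, $\Lambda^k(MM') = \Lambda^k(M)\,\Lambda^k(M')$, and then submultiplicativity of the operator norm on $\Lambda^k \C^n$ gives
\[
\prod_{j=1}^k s_j(MM') = s_1(\Lambda^k(MM')) \le s_1(\Lambda^k M)\cdot s_1(\Lambda^k M') = \prod_{j=1}^k s_j(M) \prod_{j=1}^k s_j(M').
\]
The only nonroutine step here is the identity $s_1(\Lambda^k N) = \prod_{j=1}^k s_j(N)$; I would justify it by writing $N = U\Sigma V^*$ and checking directly on the orthonormal wedge basis $\{e_{i_1}\wedge\cdots\wedge e_{i_k}\}$ that $\Lambda^k \Sigma$ is diagonal with entries $s_{i_1}\cdots s_{i_k}$, whose maximum is $s_1\cdots s_k$. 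No genuine obstacle arises; the exterior-algebra identification in part (3) is the one piece where care is needed, but it is textbook material.
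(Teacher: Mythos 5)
Your proof is correct and follows essentially the same route as the paper: the singular value decomposition together with the fact that diagonal entries of a unitary matrix have modulus at most $1$ for the trace bound, and exterior powers together with the identity $s_1(\Lambda^k N)=\prod_{j=1}^k s_j(N)$ for the product inequality. The only divergence is in the middle statement, where you derive $s_k(MM')\le s_k(M)s_1(M')$ from the best rank-$(k-1)$ approximation characterization of $s_k$ and submultiplicativity of the operator norm, while the paper simply cites the min-max characterization from Bhatia; the two arguments are standard and interchangeable, and yours has the small advantage of being self-contained.
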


\begin{proof}  
To prove the first statement, notice that  the singular value decomposition yields $M = UDV$, where $U = (u_{ij})$ and $V = (v_{ij})$ are unitary and 
$$D = \diag(s_1(M), s_2(M), \ldots ,s_n(M)).$$
Next, for a given $j$, 
$$|\sum^n_{i=1} u_{ij}v_{ji}| \leq (\sum^n_{i=1} |u_{ij}|^2)^{1/2}(\sum^n_{i=1} |v_{ji}|^2)^{1/2} = 1.$$
Hence,
$$|\Trace M| = |\sum^n_{i,j=1}u_{ij}s_j(M)v_{ji}| \leq \sum^n_{j=1} s_j(M)|\sum^n_{i=1} u_{ij}v_{ji}| = \sum^n_{j=1} s_j(M).$$

\smallskip

The second statement follows from the min-max definition of singular values \cite[Problems III.6.1, III.6.2]{Bha}.

\vskip2mm 
To prove the third, consider the wedge product  $\wedge^k M$. We have by the second statement

$$s_1 (\wedge^k (MM')) \le s_1 (\wedge^k M) s_1 (\wedge ^k  M' ). $$

We obtain the claim by combining this  with the well-known fact \cite[page 18]{Bha} that   $s_1 (\wedge^k A) =\prod_{j=1}^k s_j (A)$ for any matrix $A \in GL_n(\C)$.  \end{proof}

We will also need the following two elementary facts. 

\begin{lemma} \label{lemma:unitary} Let $U$ be a unitary matrix of size $d$ with eigenvalues $\lambda_1, \dots, \lambda_d$. Then the singular values of $(U + U^{-1} )/2$ are 
$|\Re(\lambda_1)| , \dots,  |\Re(\lambda_d)| $. \end{lemma}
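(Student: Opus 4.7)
The plan is to exploit the fact that a unitary matrix is normal, hence unitarily diagonalizable, and then reduce the claim to a transparent identity on diagonal matrices.

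First, I would invoke the spectral theorem for normal operators: since $U$ is unitary, there exist a unitary matrix $V$ and a diagonal matrix $D = \diag(\lambda_1, \ldots, \lambda_d)$ such that $U = VDV^{*}$. Because each eigenvalue of a unitary matrix lies on the unit circle, we have $|\lambda_i| = 1$ and therefore $\lambda_i^{-1} = \overline{\lambda_i}$ for each $i$. This gives $U^{-1} = V D^{-1} V^{*} = V \overline{D} V^{*}$.

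Next, I would combine these two expressions to obtain
\[
\frac{U + U^{-1}}{2} \;=\; V \cdot \diag\!\left( \tfrac{\lambda_1 + \overline{\lambda_1}}{2}, \ldots, \tfrac{\lambda_d + \overline{\lambda_d}}{2} \right) \cdot V^{*} \;=\; V \cdot \diag(\Re(\lambda_1), \ldots, \Re(\lambda_d)) \cdot V^{*}.
\]
Thus $(U+U^{-1})/2$ is a Hermitian matrix whose (real) eigenvalues are exactly $\Re(\lambda_1), \ldots, \Re(\lambda_d)$.

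Finally, I would use the elementary fact that the singular values of any normal matrix are the moduli of its eigenvalues: indeed, one may replace the signs of the diagonal entries by $\pm 1$ (which is a unitary factor) to express $V \cdot \diag(\Re(\lambda_i)) \cdot V^{*}$ in the form $W \cdot \diag(|\Re(\lambda_i)|) \cdot V^{*}$ for a unitary $W$, yielding the singular value decomposition. Hence the singular values are $|\Re(\lambda_1)|, \ldots, |\Re(\lambda_d)|$, as claimed. There is no real obstacle here; the content is purely the spectral theorem plus the observation $\lambda^{-1}=\overline{\lambda}$ on the unit circle.
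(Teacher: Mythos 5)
Your proof is correct: diagonalizing $U$ by the spectral theorem, using $\lambda_i^{-1}=\overline{\lambda_i}$ on the unit circle to see that $(U+U^{-1})/2 = V\,\diag(\Re(\lambda_1),\dots,\Re(\lambda_d))\,V^{*}$ is Hermitian, and then absorbing the signs into a unitary factor to exhibit an explicit singular value decomposition is exactly the standard argument. The paper states this lemma as an elementary fact without proof, and your write-up supplies precisely the justification it leaves implicit.
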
 

\begin{lemma}\label{lemma:trig}
\begin{enumerate}[\rm(i)]
\item $\sin(t) \geq t/2$ if $0 \leq t \leq \pi/2$.
\item $\cos(t) \leq \exp(-t^2/4) \leq \exp(-2t^2/\pi^2)$ if $0 \leq t \leq \pi$.
\end{enumerate}
\end{lemma}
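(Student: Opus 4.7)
The plan is to handle each inequality by elementary calculus, using part (i) as a stepping stone for part (ii).

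For (i), I would consider $f(t) := \sin(t) - t/2$, noting that $f(0) = 0$. Since $f'(t) = \cos(t) - 1/2$ is positive on $[0,\pi/3)$ and negative on $(\pi/3,\pi/2]$, the function $f$ rises to a single maximum at $t = \pi/3$ and then decreases. Hence it suffices to check the endpoints: $f(0) = 0$ and $f(\pi/2) = 1 - \pi/4 > 0$. An even shorter route is to invoke concavity of $\sin$ on $[0,\pi]$, which says the chord from $(0,0)$ to $(\pi/2,1)$ lies below the graph, giving $\sin(t) \geq 2t/\pi$ on $[0,\pi/2]$; since $2/\pi > 1/2$, this is stronger than the claim.

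For the first inequality in (ii), I would split at $t = \pi/2$. On $[\pi/2,\pi]$ we have $\cos(t) \leq 0 < \exp(-t^2/4)$, so the bound is trivial. On $[0,\pi/2]$, set $g(t) := \exp(-t^2/4) - \cos(t)$, noting $g(0) = 0$. Differentiating gives
\[
g'(t) = \sin(t) - \tfrac{t}{2}\exp(-t^2/4).
\]
By part (i), $\sin(t) \geq t/2$ on this interval, and clearly $\exp(-t^2/4) \leq 1$, so $g'(t) \geq t/2 - (t/2)\exp(-t^2/4) \geq 0$. Therefore $g$ is non-decreasing on $[0,\pi/2]$ and hence $g(t) \geq g(0) = 0$, proving $\cos(t) \leq \exp(-t^2/4)$.

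The second inequality in (ii) reduces to verifying $t^2/4 \geq 2t^2/\pi^2$, i.e.\ $\pi^2 \geq 8$, which holds numerically (indeed $\pi^2 \approx 9.87$). I do not anticipate any real obstacle here; the only subtlety is the case split at $t = \pi/2$ in (ii), and the observation that part (i) is exactly what is needed to make the derivative of $g$ non-negative.
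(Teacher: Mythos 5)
Your proposal is correct and follows essentially the same route as the paper: the same auxiliary functions $f(t)=\sin(t)-t/2$ and $g(t)=\exp(-t^2/4)-\cos(t)$, the same use of (i) to show $g'\geq 0$ on $[0,\pi/2]$, and the same trivial treatment of $[\pi/2,\pi]$ and of the comparison $\pi^2\geq 8$. Your extra remarks (locating the sign change of $f'$ at $\pi/3$, the Jordan-inequality alternative $\sin t\geq 2t/\pi$) only make explicit what the paper leaves implicit.
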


\begin{proof}
(i) Consider $f(t) := \sin(t)-t/2$. As $f'(t) = \cos(t)-1/2$, for $t \in [0,\pi/2]$, we have that
$$f(t) \geq \min \{f(0),f(\pi/2)\} = 0.$$

(ii) Now, again on $[0,\pi/2]$ for $g(t):= \exp(-t^2/4)-\cos(t)$ we have that $g(0) = 0$ and
$$g'(t) = \sin(t) - \frac{t}{2}\exp(-t^2/4) \geq f(t) \geq 0,$$
and so $g(t) \geq 0$. The statement is obvious if $t \in [\pi/2,\pi]$.
\end{proof}

\section{Proof of  main Theorems } \label{section:proof} 

We first prove the following auxiliary statement.

\begin{theorem} \label{theorem:main3} There is an integer $p_0 > 1$ such that the following statement holds whenever 
$p\ge p_0$. 
Let  $m \geq 43$ be an integer, $p \geq m$ a prime number, and let $n, s \ge 2$ be integers. 
Let  $V$ be a sequence  $A_1, \dots, A_n$ of elements of $S = PSL_m (p)$ of order at least $s$. 
Then 
$$\rho_V  \le  \frac{2} {p} +  \frac{120} {s} + \frac{19 }{n^{1/2} } .$$  
\end{theorem}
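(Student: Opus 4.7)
The plan is to apply the trace identity \eqref{key} with $G = S = PSL_m(p)$, split the sum over $\Irr(S)$ by character degree, and use Proposition~\ref{pro:rep2} together with singular-value estimates to control each piece. Writing
\[
\rho_V \;\le\; \frac{1}{|S|}\sum_{\chi \in \Irr(S)} d_\chi \bigl|\trace\bigl(\textstyle\prod_{i=1}^n M_i \cdot \Phi(B^{-1})\bigr)\bigr|,
\]
with $M_i := (\Phi(A_i)+\Phi(A_i^{-1}))/2$, I split $\Irr(S)$ at the threshold $d_\chi = p^{(m^2-m-1)/2}$ appearing in Proposition~\ref{pro:rep2}(i). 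For \emph{low}-degree $\chi$ (which includes the trivial character), every $M_i$ and $\Phi(B^{-1})$ is a contraction, so $|\trace(\cdot)| \le d_\chi$; the low-degree contribution is then $\frac{1}{|S|}\sum_{\chi \text{ low}} d_\chi^2 \le \frac{5}{3p} < \frac{2}{p}$ by Proposition~\ref{pro:rep2}(i), accounting for the first term in the bound. The work is in the high-degree piece.

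For a fixed \emph{high}-degree $\chi$ with $d := d_\chi \ge p^{(m^2-m-1)/2}$, the key step is to convert the character-theoretic multiplicity bound of Proposition~\ref{pro:rep2}(iv) into quantitative decay of the singular values of $\prod_i M_i$. Since $\Phi(A_i)$ is unitary, Lemma~\ref{lemma:unitary} identifies the singular values of $M_i$ with $|\cos\theta_{i,j}|$, where the $e^{i\theta_{i,j}}$ are the eigenvalues of $\Phi(A_i)$ and lie among the $N_i$-th roots of unity with $N_i := \ord(A_i) \ge s$. Proposition~\ref{pro:rep2}(iv) bounds the multiplicity of each such eigenvalue by $3d/N_i$, and the at-most-$4$-to-$1$ map $\lambda \mapsto |\Re\lambda|$ (identifying $\lambda$ with $\bar\lambda, -\lambda, -\bar\lambda$) gives multiplicity at most $12 d/N_i$ for each \emph{distinct} singular value of $M_i$. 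A pigeonholing argument then shows that for $r > 12 d/s$,
\[
s_r(M_i) \;\le\; \cos\!\Bigl(\tfrac{\pi r}{6 d} - \tfrac{2\pi}{N_i}\Bigr) \;\le\; \cos\!\Bigl(\tfrac{\pi r}{6 d} - \tfrac{2\pi}{s}\Bigr),
\]
the second inequality being monotonicity of $\cos$ on $[0,\pi/2]$ together with $N_i \ge s$.

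To pass to the product, Lemma~\ref{lemma:singular} gives $|\trace(\prod_i M_i \Phi(B^{-1}))| \le \sum_r s_r(\prod_i M_i)$ (unitary invariance strips $\Phi(B^{-1})$), and iterating the log-submultiplicative bound $\prod_{j=1}^k s_j(AB) \le \prod_{j=1}^k s_j(A) s_j(B)$ from Lemma~\ref{lemma:singular} yields the weak log-majorization $\prod_{j=1}^k s_j(\prod_i M_i) \le \prod_{j=1}^k \prod_i s_j(M_i)$ for every $k$. A Hardy--Littlewood--P\'olya comparison with the convex increasing function $t \mapsto e^t$ then upgrades this to
\[
\sum_{r=1}^{d} s_r\!\Bigl(\textstyle\prod_i M_i\Bigr) \;\le\; \sum_{r=1}^{d} \prod_{i=1}^n s_r(M_i) \;\le\; \frac{12 d}{s} \;+\; \sum_{r > 12 d/s} \Bigl[\cos\!\bigl(\tfrac{\pi(r - 12 d/s)}{6 d}\bigr)\Bigr]^n,
\]
using the trivial bound $s_r(M_i) \le 1$ for the head $r \le 12 d/s$. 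The tail is controlled by Lemma~\ref{lemma:trig}(ii) ($\cos t \le e^{-2t^2/\pi^2}$) and a Gaussian integral $\int_0^\infty e^{-nu^2/(18 d^2)}\,du = O(d/\sqrt n)$, giving $|\trace(\prod_i M_i \Phi(B^{-1}))| \le C_1 d/s + C_2 d/\sqrt n$ for absolute constants $C_1, C_2$.

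Summing over high-degree $\chi$ weighted by $d_\chi/|S|$ and using $\sum_{\chi} d_\chi^2 \le |S|$ collapses the $d$-dependence to give a high-degree contribution of $\le C_1/s + C_2/\sqrt n$; together with the $\frac{2}{p}$ low-degree bound this yields $\rho_V \le \frac{2}{p} + C_1/s + C_2/\sqrt n$, and the stated constants $120$ and $19$ are loose enough to absorb the precise values of $C_1, C_2$ produced by the Gaussian estimate. The main obstacle I expect is the singular-value bookkeeping in the middle step---converting the eigenvalue multiplicity input of Proposition~\ref{pro:rep2}(iv) into an $s_r(M_i)$ bound that is sharp enough to survive the Horn-type log-majorization and produce the $\sqrt n$-decay---the rest being routine assembly of standard inequalities.
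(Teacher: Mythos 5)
Your proposal is correct and follows essentially the same route as the paper: the trace identity, the degree split at $p^{(m^2-m-1)/2}$ with Proposition~\ref{pro:rep2}(i) for the low-degree part, and Proposition~\ref{pro:rep2}(iv) plus Lemmas~\ref{lemma:unitary}, \ref{lemma:trig}, \ref{lemma:singular} and a Gaussian integral for the high-degree part, finished with $\sum_\chi \chi(1)^2 = |S|$. The only deviation is cosmetic: you pass from the Horn-type weak log-majorization to $\sum_r s_r(\prod_i M_i) \le \sum_r \prod_i s_r(M_i)$ via the standard majorization principle, whereas the paper uses the more elementary device $s_l(M)^l \le \prod_{j\le l} s_j(M)$ and bounds block products $\prod_{j\le l}s_j(B_i)$ directly; both yield the stated constants (yours in fact slightly better).
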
 

All theorems from Section \ref{section:introduction} will follow from this theorem via some short arguments. 

\subsection{Proof of Theorem \ref{theorem:main3} }

We start with the identity \eqref{key}

\begin{equation} \label{key1}  \P (\prod_{i=1}^n \hat {A_i } = B) =  \frac{1}{|S|}\sum_{\Phi \in \Irr(S)}(\dim \Phi) 
  \trace\left( \prod_{i=1}^n \frac{ \Phi (A_i) + \Phi (A_i^{-1})  }{2} \Phi (B^{-1}) \right).
\end{equation} 

In what follows, we  may (and will) assume that $\Phi (X) $ is unitary for any element $X \in  PSL_m(p)$. As the absolute 
value of the trace of any unitary matrix does not exceed its size, we have 
$$|\trace \prod^n_{i=1}\Phi(A_i^{\pm 1})\Phi(B^{-1})| = |\trace \Phi(\prod^n_{i=1}A_i^{\pm 1}B^{-1})| \leq \dim \Phi.$$
Using Proposition \ref{pro:rep2}(i), we can easily bound  the contribution in the RHS of \eqref{key1} coming from representations of dimension less than 
$$d_0:=p^{(m^2-m-1)/2},$$ as the absolute value of this contribution is at most
\begin{equation}\label{small}
  \frac{1}{|S|}\sum_{\chi \in \Irr(G),~\chi(1) < d_0}\chi(1)^2  <  \frac{5}{3p}.
\end{equation} 

\vskip2mm

By Proposition \ref{pro:rep2}(iii), $k_i := \ord(A_i) < p^{m+1} < d_0$. 
Recall that $s \leq \min\{k_i \mid 1 \le i \le n\}$, the minimum order of the elements $A_i$. 
%We set $r := s/60$. 

%$$r = \frac{1}{60} \min  \{ s, n \} . $$ 

\vskip2mm 
Now we focus on representations $\Phi$ of dimension $d \geq d_0$. Choosing $p_0$ large enough, we may apply 
Proposition \ref{pro:rep2}(iv) to conclude that   the eigenvalues of 
$\Phi( A_i)$ are among the $k_i$-th roots of unity, with multiplicity at most 
\begin{equation}\label{for-m3}
  m_i := \lfloor \frac{3d}{k_i} \rfloor \le \frac{3d}{k_i}. 
\end{equation}  

As $\Phi (A_i)$ is unitary, by Lemma \ref{lemma:unitary}, any singular value of 
$$B_i = \frac{1}{2} \left( \Phi (A_i) + \Phi (A_i^{-1} )\right)$$ 
is either $1$ or $|\cos(2\pi j/k_i)|$ for some $1 \leq j \leq k_i-1$. Replacing $j$ by $k_i-j$ 
if necessary we may assume
that $1 \leq j \leq k_i/2$. Furthermore, if $k_i/4 < k_i \leq k_i/2$, we can replace 
$|\cos(2\pi j/k_i)|$ by $|\cos(\pi(k_i-2j)/k_i)|$ to get an angle in the range $[0,\pi/2]$. This procedure ensures 
by Lemma \ref{lemma:trig} 
that any singular value of $B_i$ is either $1$ or 
\begin{equation}\label{for-s5}
  \cos(\pi j/k_i) \leq \exp(-2j^2/k_i^2)
\end{equation}  
with $1 \leq j \leq k_i/2$, and each such occurs as a singular value of $B_i$ at most $4m_i$ times. 

Next we set 
\begin{equation}\label{for-l}
  l_0:= \left\lceil \frac{120d}{s} \right\rceil \ge  \frac{120d}{k_i} \geq 40m_i, 
\end{equation}  
where the inequalities follow from the condition on $s$ and \eqref{for-m3}.

Consider the matrix 
$$ M =  M(\Phi) = \prod_{i=1} ^n   \frac{\Phi (A_i)  + \Phi (A_i^{-1} ) }{2}\Phi(B^{-1})  
     = (\prod^n_{i=1}B_i)\Phi(B^{-1}). $$
Applying Lemma \ref{lemma:singular} repeatedly   and using the fact that $s_1(\Phi(B^{-1})) = 1$,  we have, for any $l$, that 
\begin{equation}\label{for-s6}
  \prod_ {j=1}^{l}  s_j  (M) \le  \prod^n_{i=1} \prod^{l}_{j=1}s_j(B_i).
\end{equation}  

Assume that $l > l_0$.  
As $l_0 \ge 40m_i$, we can write  $l = 4m_ia_i+b_i$ with integers $4m_i \leq b_i < 8m_i$ and $4 \le a_i$.  In fact,
$$a_i = \left\lfloor \frac{l-4m_i}{4m_i} \right\rfloor > \frac{l-8m_i}{4m_i} > \frac{l}{5m_i}.$$

\noindent
We are going  to show that  $\sigma_i := \prod^{l}_{j=1}s_j(B_i)$ is small, for all $1 \le i \le n$.
As shown above, the multiplicity of any singular value of $B_i$ is 
at most $4m_i <  l/5$. Furthermore, the different singular values of $B_i$  (in decreasing order) are bounded by 
$\exp(-2j^2/k_i^2)$ with $0 \leq j \leq k_i/2$ by \eqref{for-s5}. 
We can now bound $\sigma_i$ from the above by bounding the first $b_i$ singular values of $B_i$ by $1$
(note that we chose $b_i \geq 4m_i$), the next
$4m_i$ by $\exp(-2/k_i^2)$, the next $4m_i$ by $\exp(-2 \cdot 2^2/k_i^2)$, and so on. It follows that 
$\sigma_i \leq \exp(-E_i)$, where 
$$E_i = 4m_i \cdot \left(\frac{2}{k_i^2} +  \frac{2 \cdot 2^2}{k_i^2} + \ldots + \frac{2 \cdot a_i^2}{k_i^2}\right). $$

Since $\sum_{k=1}^{a_i} k^2 > a_i^3 /3 $ and $a_i > l/5m_i$, we obtain 

 $$E_i   > \frac{8m_ia_i^3}{3k_i^2} > \frac{8l^3}{375k_i^2m_i^2} \geq \frac{8l^3}{3375d^2} >  \frac{l^3}{422d^2},$$  
as $k_im_i  \leq 3d$ by \eqref{for-m3}. It follows  that

$$ \sigma_i \leq \exp(-l^3/422d^2).$$

Combining this estimate with \eqref{for-s6}, we obtain 
$$s_{l}(M)^{l} \leq \prod_ {j=1}^{l}  s_j  (M) \leq \prod^n_{i=1}\sigma_i \leq (\exp(-l^3/422d^2))^{n} = \exp(-nl^3/422d^2),$$
whence
$$s_{l} (M) \le \exp(-nl^2/422d^2)$$
for any $l > l_0$. 

Now we have 
$$\sum^d_{l = l_0+1}s_l(M) \leq \sum^d_{l=l_0+1}\exp\left(-\frac{nl^2}{422d^2}\right), $$ where the RHS is bounded from above by

$$     \int^\infty_{0}e^{-\frac{nt^2}{422d^2}}dt
    < \sqrt { \frac{422}{n}} d \int^\infty_0 e^{-t^2}dt < \frac{18.3d}{n^{1/2}}.$$

Using the trivial estimate $s_j(M) \leq 1$ for $1 \leq j \leq l_0$ and  Lemma \ref{lemma:singular}, we conclude that

$$|\Trace M | \le l_0 + \sum^d_{l = l_0+1}s_l(M)  < \lceil \frac{120d}{s} \rceil  +  \frac{18.3d}{n^{1/2}} \leq  \frac{120d}{s} +1 + \frac{18.3 d}{n^{1/2}} .$$

Thus, the contribution 
$$w (\Phi) =  (\dim\Phi)  \Trace \left(\prod_{i=1} ^n   \frac{\Phi (A_i)  + \Phi (A_i^{-1})  }{2}  \Phi (B^{-1} )\right) $$ 
is at most 
$$d^2 \cdot  \left( \frac{120}{s} + \frac{1}{d_0} + \frac{18.3 }{n^{1/2}}\right) $$
in absolute value, when $d =\dim \Phi$ is 
at least $d_0 = p^{(m^2-m-1)/2} > 2p$.

Using the identity $\sum_{\Phi \in \Irr(S)} (\dim \Phi )^2  = |S|$ and \eqref{small}, we conclude that  
the RHS in (\ref{key1}) is at most 
$$\frac{5}{3p}+    \frac{120}{s} + \frac{18.3}{n^{1/2}}  + \frac{1}{d_0} < \frac{2}{p} + \frac{120}{s} + \frac{19}{n^{1/2} }$$  
proving the theorem.

\subsection{Proof of Theorem \ref{theorem:main0}}

Replacing $A_i$ by $A^*_i := \diag(A_i,\det(A_i)^{-1},I_r)$ and 
$B^* := \diag(B,\det(B)^{-1},I_r)$ for a suitable $r \geq 1$, 
where $I_r$ denotes the identity $r \times r$-matrix, we can assume that $A^*_i, B^* \in SL_{m+r+1}(p)$ 
with $m+r+1 \geq 43$. Let
$\bar{A}_i$ and $\bar{B}$ denote the image of $A^*_i$, respectively of $B^*$, in $PSL_{m+r+1}(p)$.   
Note that the condition $r \geq 1$ implies that $\ord(A_i) = \ord(\bar{A}_i)$ for all $i$. Also, if 
$$A_1^{i_1}A_2^{i_2} \ldots A_n^{i_n} = B$$ 
for some $i_j = \pm 1$, $1 \leq j \leq n$, then
$$\bar{A}_1^{i_1}\bar{A}_2^{i_2} \ldots \bar{A}_n^{i_n} = \bar{B}.$$ 
Hence, when $p$ is sufficiently large,   
Theorem \ref{theorem:main3} applied to $PSL_{m+r+1}(p)$ implies the bound 
$$ \frac{2}{p} + \frac{120}{s} + \frac{19}{n^{1/2} }  \le 141 \cdot \max \left\{\frac{1}{s} , \frac{1}{n^{1/2} } \right \},$$  
given that $p \ge \min \{s, \sqrt{n} \}$. 

\subsection {Proof of Theorem \ref{theorem:main}} 

%Let us first assume that $A_i \in SL_m (\C)$ for all $i$. 
For any $B \in GL_m (\C)$ and the sequence $A_1, \dots, A_n$ of elements in $GL_m(\C)$, 
we use Corollary \ref{lemma:embed} to embed $A_1, \dots, A_n, B$ into $GL_m (p)$, for a large prime $p \ge m$. Theorem \ref{theorem:main} now follows from Theorem \ref{theorem:main0}. 

\subsection{Proof of Theorem \ref{theorem:main2}} 
If all $A_1, \dots, A_n$ have order at least $s$, then we can apply Theorem \ref{theorem:main}.
To handle the case when not every $A_i$  has order at least $s$, let us  revisit the setting of Theorem \ref{theorem:main3}.  Consider

$$ \P (\prod_{i=1}^n \hat {A_i } = B) = \sum_{\Phi \in \Irr(G)}  (\dim \Phi)  
     \trace \left( \prod_{i=1}^n \frac{ \Phi (A_i) + \Phi (A_i^{-1})  }{2} \Phi (B^{-1})\right).$$

We have

$$|\trace \prod_{i=1}^n \frac{ \Phi (A_i) + \Phi (A_i^{-1})  }{2} \Phi (B^{-1})| \le \sum_{i=1}^{\dim \Phi } s_i (M) $$ where $M$ is the product of $B_i : =(1/2)(\Phi (A_i) + \Phi (A_i^{-1}))$, $1 \leq i \leq n$.
Since $s_1 (B_i ) \le 1 $, by Lemma \ref{lemma:singular} we have

$$\sum_{i=1}^{\dim \Phi} s_i (M) \le \sum_{i=1}^{\dim \Phi}  s_ i (M')$$ where $M'$ is the product of all $B_i$ where $A_i$ has order at least $s$.  Now we can repeat the rest of the proof. 

%\subsection{Extension to $GL_m$ }\label{subsection:gl} 

\end{document}